	\newtheorem{theorem}{Theorem}[section]
	\newtheorem{sublemma}{}[theorem]  %% new
	\newtheorem{lemma}[theorem]{Lemma}
	\newtheorem{corollary}[theorem]{Corollary}
	\newtheorem{proposition}[theorem]{Proposition}
	\newtheorem{conjecture}[theorem]{Conjecture}
	\newcommand{\cM}{\mathcal{M}}
	\DeclareMathOperator{\si}{si}
	\DeclareMathOperator{\cl}{cl}
	\newcommand{\del}{\setminus}
	\newcommand{\con}{/}
\begin{document}
\sloppy
 \begin{abstract}
For a  matroid $M$ having $m$ rank-one flats, the density $d(M)$ is $\tfrac{m}{r(M)}$ unless $m = 0$, in which case $d(M)= 0$. 
A matroid is density-critical if all of its proper minors of non-zero rank have lower density. By a 1965 theorem of Edmonds, a matroid that 
is minor-minimal among simple matroids that cannot be covered by $k$ independent sets is density-critical. 
 It is straightforward to show that  $U_{1,k+1}$ is the only minor-minimal loopless matroid with no covering by 
  $k$ independent sets. We prove that there are exactly 
 ten minor-minimal simple obstructions to a matroid being able to be covered by two independent sets. These ten matroids are precisely the density-critical matroids $M$ such that $d(M) > 2$ but $d(N) \le 2$ for all proper minors $N$ of $M$.  All   density-critical matroids of density less than $2$ are series-parallel networks.  For $k \ge 2$, although finding all density-critical matroids of density at most $k$ does not seem straightforward,  
we do solve this problem for  $k=\tfrac{9}{4}$. 
 
 \end{abstract}

\title[On density-critical matroids]{On density-critical matroids}

	\author[R. Campbell]{Rutger Campbell}
	\address{Department of Combinatorics and Optimization,
	University of Waterloo, Waterloo, Canada}
	\email{rtrcampb@uwaterloo.ca}
	\author[K. Grace]{Kevin Grace}
\address{School of Mathematics, University of Bristol, Bristol, UK}
\email{kevin.grace@bristol.ac.uk}	
%\address{Department of
 %Mathematics, Louisiana State University, Baton Rouge, Louisiana, USA}
%\email{kgrace3@lsu.edu}
	\author[J. Oxley]{James Oxley}
\address{Department of
 Mathematics, Louisiana State University, Baton Rouge, Louisiana, USA}
\email{oxley@math.lsu.edu}
\author[G. Whittle]{Geoff Whittle}
\address{School of Mathematics, Statistics and Operations Research, Victoria University of Wellington, Wellington, New Zealand}
\email{geoff.whittle@vuw.ac.nz}
	
	\subjclass{05B35}
	\date{\today}
\maketitle

\section{Introduction}
Our notation and terminology  follow Oxley~\cite{JGO11}. For a positive integer $k$, 
let $\cM_k$ be the class of matroids $M$ for which $E(M)$ is the union of  
$k$  independent sets. We say such a matroid can be {\em covered} by $k$  independent sets.
Edmonds \cite{JE65} gave  the following characterization of the members of $\cM_k$.

\begin{theorem}\label{edmonds}
	A matroid $M$ has $k$ independent sets whose union is $E(M)$ if and only if, for every subset $A$ of $E(M)$, 
		$$k\,r(A) \ge |A|.$$ 
\end{theorem}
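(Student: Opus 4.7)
The forward direction is immediate: if $E(M) = I_1 \cup \cdots \cup I_k$ with each $I_j$ independent, then for any $A \subseteq E(M)$ each set $A \cap I_j$ is independent in $M$, so $|A \cap I_j| \le r(A)$, and summing over $j$ gives $|A| \le k \cdot r(A)$.

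For the converse I would apply the matroid union theorem of Nash-Williams and Edmonds, which says that for matroids $M_1, \ldots, M_k$ on a common ground set $E$, the sets of the form $X_1 \cup \cdots \cup X_k$ with each $X_i$ independent in $M_i$ are the independent sets of a matroid on $E$, whose rank function is
$$r_{\vee}(X) \;=\; \min_{Y \subseteq X} \left( |X \setminus Y| + \sum_{i=1}^k r_{M_i}(Y) \right).$$
Taking all $M_i$ equal to $M$ and specialising $X = E(M)$, the matroid $M$ can be covered by $k$ independent sets exactly when $r_{\vee}(E(M)) = |E(M)|$. Substituting into the minimax formula rearranges this equality to the condition $|E(M) \setminus Y| + k\, r(Y) \ge |E(M)|$ for every $Y$, equivalently $k\, r(Y) \ge |Y|$ for every $Y \subseteq E(M)$.

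The main obstacle is therefore establishing the matroid union theorem itself, which I would prove by an augmenting-path argument modeled on the bipartite matching proof of K\"onig's theorem. Starting from any tuple $(I_1, \ldots, I_k)$ of independent sets, build an exchange digraph on $E$ whose arcs encode the valid single-element swaps into each $I_j$ supplied by the basis exchange axiom. Either there is an augmenting path from some element of $E \setminus \bigcup_j I_j$ to a sink vertex, which can be used to strictly enlarge $\bigcup_j I_j$; or no such path exists, in which case the set $Y$ of vertices reachable from the unused elements is spanned in each $M_i$ by $I_i \cap Y$, which simultaneously verifies tightness in the minimax formula and supplies the exchange axiom needed to confirm that $r_{\vee}$ really is a matroid rank function. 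Carrying out this augmenting-path analysis and the accompanying matroid-axiom check is where I expect essentially all the technical work to lie; the deduction of Edmonds' covering theorem from it is then a short, purely algebraic manipulation.
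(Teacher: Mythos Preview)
Your argument is correct in outline: the forward implication is immediate, and deducing the converse from the Nash--Williams/Edmonds matroid union rank formula is the standard route, with your augmenting-path sketch being one of the usual ways to establish that formula.

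However, there is nothing to compare against here. In the paper this theorem is not proved at all; it is simply quoted as Edmonds' 1965 result \cite{JE65} and used as a black box. So your write-up supplies a proof where the paper intentionally gives none. If you want to match the paper's treatment, you should just cite Edmonds; if you want to include a proof for completeness, what you have is fine, though the augmenting-path portion would need to be fleshed out (in particular, the verification that the reachable set $Y$ really is closed under the exchange arcs and hence spanned by each $I_i\cap Y$) before it counts as a full proof rather than a sketch.
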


Clearly, $\cM_k$ is closed under deletion. However, $\cM_k$ is not closed under contraction.
% For example, the $6$-element rank-$3$ matroid $Q_6$,  which is obtained by relaxing two circuit-hyperplanes of $M(K_4)$, can be covered by two independent sets. However, contracting the point of this matroid that lies on two 3-point lines gives a matroid that cannot be covered by two independent sets. 
For example, the $6$-element rank-$3$ uniform matroid $U_{3,6}$ can be covered by two independent sets, yet contracting a point of this matroid
gives $U_{2,5}$, which cannot.  
%In this paper, we consider the minor-minimal matroids that are not in $\cM_k$.
For all $k$, 
the loop is the unique minor-minimal matroid not in $\cM_k$.
On that account, we limit the types of obstructions we consider.
We first examine the minor-minimal loopless matroids that are not in $\cM_k$. We find the following result.

\begin{proposition}
\label{u1k}
	The unique minor-minimal loopless matroid that cannot be covered by $k$ independent sets is $U_{1,k+1}$. 
	\end{proposition}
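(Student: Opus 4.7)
The plan is as follows. First I would verify that $U_{1,k+1}$ itself is a minor-minimal loopless non-member of $\cM_k$: Edmonds' criterion (Theorem~\ref{edmonds}) fails at $A = E(U_{1,k+1})$ because $k+1 > k\cdot 1$, while every proper loopless minor of $U_{1,k+1}$ is either empty or has the form $U_{1,j}$ with $j\le k$ (contractions create loops, and a deletion preserves looplessness), all of which lie in $\cM_k$.

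For the converse, suppose $M$ is an arbitrary minor-minimal loopless matroid not in $\cM_k$; the goal is to show $M\cong U_{1,k+1}$. By Theorem~\ref{edmonds} there exists $A\subseteq E(M)$ with $|A|>k\,r(A)$. If $A$ were a proper subset of $E(M)$, then $M|A = M\setminus(E(M)\setminus A)$ would be a strictly smaller loopless minor satisfying the same inequality, contradicting minor-minimality; hence $A=E(M)$, so
\[ |E(M)| \;\ge\; k\,r(M)+1. \]

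Next I would show $r(M)=1$. Suppose instead $r(M)\ge 2$. I first claim every parallel class $P$ of $M$ has $|P|\ge k+1$. If $|P|\le k$, pick $e\in P$ and form $M' = (M/e)\setminus(P\setminus\{e\})$. The loops of $M/e$ are exactly the elements parallel to $e$ in $M$, namely $P\setminus\{e\}$, so $M'$ is a loopless proper minor of $M$ with ground set $E(M)\setminus P$ and rank $r(M)-1$, and the bound above gives
\[ |E(M')| \;\ge\; k\,r(M)+1-|P| \;\ge\; k\,r(M)+1-k \;>\; k\bigl(r(M)-1\bigr) \;=\; k\,r(M'), \]
so Edmonds forces $M'\notin\cM_k$, contradicting minor-minimality. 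Now since $r(M)\ge 2$, the simplification $\si(M)$ has at least two elements, so $M$ has at least two parallel classes; pick $e$ in one class and let $P'$ be another. Then $M\setminus e$ is a proper loopless minor of $M$ in which $P'$ is intact with $|P'|\ge k+1 > k = k\,r_{M\setminus e}(P')$, so $M\setminus e\notin\cM_k$ by Edmonds, a final contradiction.

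Hence $r(M)=1$, so $M\cong U_{1,n}$ for some $n$; the displayed bound gives $n\ge k+1$, and since $U_{1,n-1}$ is a proper loopless minor that lies in $\cM_k$ whenever $n-1\le k$, minor-minimality forces $n = k+1$. The main subtlety is the contraction step: one must correctly identify the loops of $M/e$ as precisely $P\setminus\{e\}$ and verify arithmetically that the resulting minor $M'$ is both proper and large enough to violate Edmonds' bound.
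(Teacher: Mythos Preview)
Your proof is correct, but it takes a somewhat different route from the paper's.

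The paper's argument is more constructive and avoids the harder direction of Edmonds' theorem entirely. Given an element $e$ with parallel class $P\cup\{e\}$ (so $|P|=\ell$), the paper uses minimality to cover the loopless minor $M/e\setminus P$ by $k$ independent sets $A_1,\dots,A_k$, and then, assuming $\ell\le k-1$, \emph{explicitly builds} a cover of $M$ by distributing $e$ and the $\ell$ parallel copies among the $A_i$'s. This forces the parallel class of $e$ to have at least $k+1$ elements, whence $U_{1,k+1}$ sits inside $M$ as a loopless restriction and minimality gives $M\cong U_{1,k+1}$ immediately.

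Your argument instead leans on Edmonds' criterion throughout: you first establish $|E(M)|\ge kr(M)+1$, then use a counting inequality to show the contracted minor $M'$ still violates the Edmonds bound, and finally run a separate deletion argument to force $r(M)=1$. This works, but is slightly more circuitous; in particular, once you have shown that a single parallel class has at least $k+1$ elements, you could conclude at once as the paper does, without the extra deletion step. The trade-off: the paper's proof is shorter and self-contained (it only needs the easy direction of Theorem~\ref{edmonds}), while yours shows how Edmonds' theorem can mechanize the whole argument via pure counting.
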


Restricting  attention to minor-minimal simple matroids not in $\cM_k$, we find much more structure.
We have the following collection of ten matroids for the case when $k$ is two. In this result, $P(M_1,M_2)$ denotes the parallel connection of matroids $M_1$ and $M_2$, this matroid being unique when both $M_1$ and $M_2$ have transitive automorphism groups.  Geometric representations of the nine of these ten matroids of rank at most four are shown in Figure~1. A diagram representing the tenth matroid, $P(M(K_4),M(K_4))$ is also given where we note that this matroid has rank five.
%~\ref{mainfig}. %The first is obtained from $M(K_4)$ by freely adding a point on one of the $3$-point lines and the second is obtained from the 2-sum of two copies of $U_2,4}$ by adding a point $p$ that is on  three    $3$-point lines any two of which meet in $p$

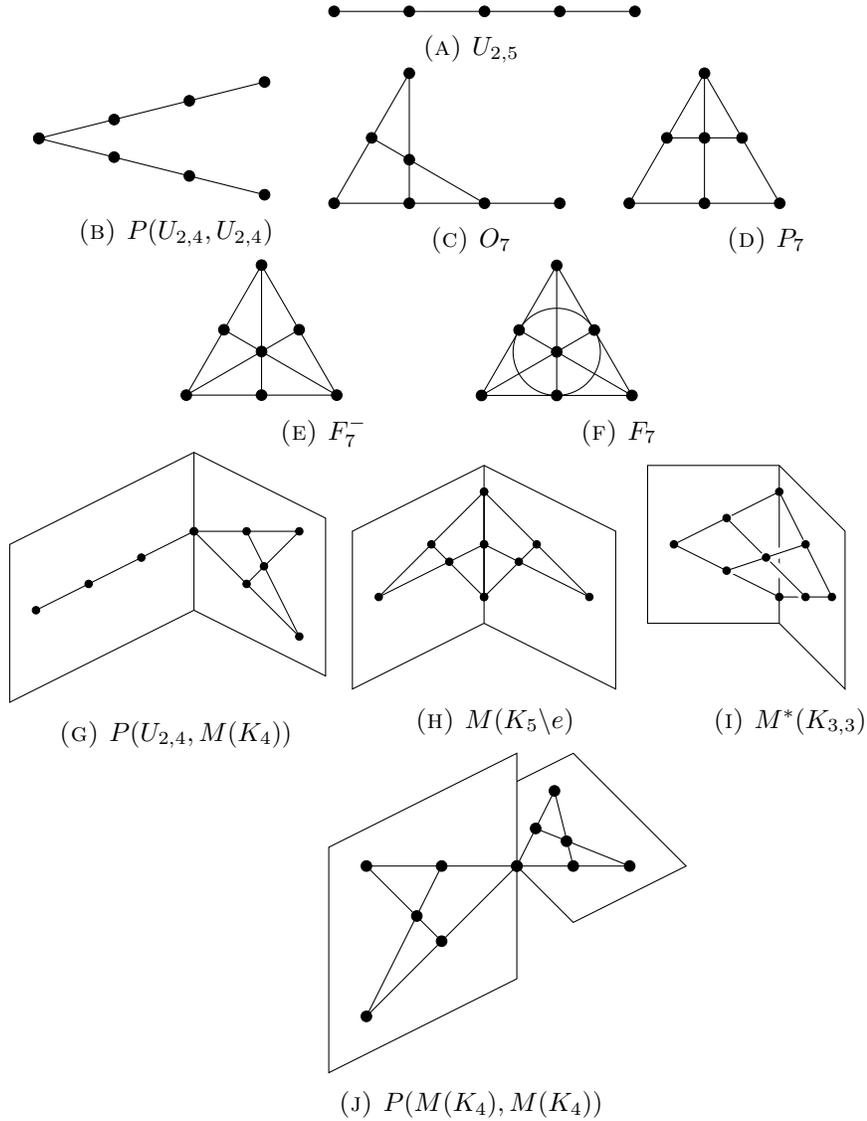
\begin{figure}%\label{mainfig}
\centering
	\begin{subfigure}{0.3\textwidth}
		\begin{tikzpicture}%U_{2,5}
			\filldraw (0,0) circle (2pt);
			\filldraw (1,0) circle (2pt);
			\filldraw (2,0) circle (2pt);
			\filldraw (3,0) circle (2pt);
			\filldraw (4,0) circle (2pt);
			\draw (0,0) -- (4,0);
		\end{tikzpicture}
		\caption{$U_{2,5}$}
	\end{subfigure}

	\begin{subfigure}{0.3\textwidth}
		\begin{tikzpicture}%P(U_{2,4},U_{2,4})
			
			\filldraw (0,0) circle (2pt);
			\filldraw (1,-0.25) circle (2pt);
			\filldraw (2,-0.5) circle (2pt);
			\filldraw (3,-.75) circle (2pt);
			\draw (0,0) -- (3,-0.75);
			
			\filldraw (0,0) circle (2pt);
			\filldraw (1,0.25) circle (2pt);
			\filldraw (2,0.5) circle (2pt);
			\filldraw (3,0.75) circle (2pt);
			\draw (0,0) -- (3,0.75);
		\end{tikzpicture}
		\caption{$P(U_{2,4},U_{2,4})$}
	\end{subfigure}
	\begin{subfigure}{0.3\textwidth}
		\begin{tikzpicture}%O_7
			\filldraw (0,0) circle (2pt);
			\filldraw (1,0) circle (2pt);
			\filldraw (2,0) circle (2pt);
			\filldraw (3,0) circle (2pt);
			\filldraw (0.5,0.87) circle (2pt);
			\filldraw (1,1.73) circle (2pt);
			\filldraw (1,0.58) circle (2pt);
			\draw (0,0) -- (3,0);
			\draw (0,0) -- (1,1.73);
			\draw (1,1.73) -- (1,0);
			\draw (0.5,0.87) -- (2,0);
		\end{tikzpicture}
		\caption{$O_7$}
	\end{subfigure}
	\begin{subfigure}{0.3\textwidth}
		\begin{tikzpicture}%P_7
			\filldraw (0,0) circle (2pt);
			\filldraw (1,0) circle (2pt);
			\filldraw (2,0) circle (2pt);
			\filldraw (0.5,0.87) circle (2pt);
			\filldraw (1,0.87) circle (2pt);
			\filldraw (1.5,0.87) circle (2pt);
			\filldraw (1,1.73) circle (2pt);
			\draw (0,0) -- (2,0);
			\draw (0.5,0.87) -- (1.5,0.87);
			\draw (1,1.73) -- (0,0);
			\draw (1,1.73) -- (1,0);
			\draw (1,1.73) -- (2,0);
		\end{tikzpicture}
		\caption{$P_7$}
	\end{subfigure}
	\begin{subfigure}{0.3\textwidth}
		\begin{tikzpicture}%F^-_7
			\filldraw (0,0) circle (2pt);
			\filldraw (1,0) circle (2pt);
			\filldraw (2,0) circle (2pt);
			\filldraw (0.5,0.87) circle (2pt);
			\filldraw (1,0.58) circle (2pt);
			\filldraw (1.5,0.87) circle (2pt);
			\filldraw (1,1.73) circle (2pt);
			\draw (0,0) -- (2,0);
			\draw (0.5,0.87) -- (2,0);
			\draw (0,0) -- (1.5,0.87);
			\draw (1,1.73) -- (0,0);
			\draw (1,1.73) -- (1,0);
			\draw (1,1.73) -- (2,0);
		\end{tikzpicture}
		\caption{$F^-_7$}
	\end{subfigure}
	\begin{subfigure}{0.3\textwidth}
		\begin{tikzpicture}%F^_7
			\filldraw (0,0) circle (2pt);
			\filldraw (1,0) circle (2pt);
			\filldraw (2,0) circle (2pt);
			\filldraw (0.5,0.87) circle (2pt);
			\filldraw (1,0.58) circle (2pt);
			\filldraw (1.5,0.87) circle (2pt);
			\filldraw (1,1.73) circle (2pt);
			\draw (0,0) -- (2,0);
			\draw (0.5,0.87) -- (2,0);
			\draw (0,0) -- (1.5,0.87);
			\draw (1,1.73) -- (0,0);
			\draw (1,1.73) -- (1,0);
			\draw (1,1.73) -- (2,0);
			\draw (1,0.58) circle (0.58);
		\end{tikzpicture}
		\caption{$F_7$}
	\end{subfigure}

	\begin{subfigure}{0.35\textwidth}
		\begin{tikzpicture}[scale=0.7]%P(K_4,U_{2,4})
			\tikzmath{\S=0.5;}
		    \draw (2.5,-1.5-\S*2.5) -- (2.5,1.5-\S*2.5) -- (0,1.5) -- (-3.5,1.5-\S*3.5) -- (-3.5,-1.5-\S*3.5) -- (0,-1.5) -- cycle;
			\draw (0,-1.5) -- (0,1.5);
			
			\filldraw (0,0) circle (2pt);
			\filldraw (1,0.5-\S*1) circle (2pt);
			\filldraw (1,-0.5-\S*1) circle (2pt);
			\filldraw (1.33,0-\S*1.33) circle (2pt);
			\filldraw (2,1-\S*2) circle (2pt);
			\filldraw (2,-1-\S*2) circle (2pt);
			\draw (0,0) -- (2,1-\S*2);
			\draw (0,0) -- (2,-1-\S*2);
			\draw (1,-0.5-\S*1) -- (2,1-\S*2);
			\draw (1,0.5-\S*1) -- (2,-1-\S*2);
			
			\filldraw (0,0) circle (2pt);
			\filldraw (-1,0-\S*1) circle (2pt);
			\filldraw (-2,0-\S*2) circle (2pt);
			\filldraw (-3,0-\S*3) circle (2pt);
			\draw (0,0) -- (-3,0-\S*3);
		\end{tikzpicture}
		\caption{$P(U_{2,4},M(K_4))$}
	\end{subfigure}
	\begin{subfigure}{0.3\textwidth}
		\begin{tikzpicture}[scale=0.7]%K_5-e
			\tikzmath{\S=0.5;}
		    \draw (-2.5,-1.5-\S*2.5) -- (-2.5,1.5-\S*2.5) -- (0,1.5) -- (2.5,1.5-\S*2.5) -- (2.5,-1.5-\S*2.5) -- (0,-1.5) -- cycle;
			\draw (0,-1.5) -- (0,1.5);
			
			\filldraw (0,0) circle (2pt);
			\filldraw (0,-1) circle (2pt);
			\filldraw (0,1) circle (2pt);
			\filldraw (-2,0-\S*2) circle (2pt);
			\filldraw (-1,0.5-\S*1) circle (2pt);
			\filldraw (-0.66,0-\S*0.66) circle (2pt);
			\draw (0,1) -- (0,-1);
			\draw (0,1) -- (-2,0-\S*2);
			\draw (0,0) -- (-2,0-\S*2);
			\draw (0,-1) -- (-1,0.5-\S*1);
			
			\filldraw (0,0) circle (2pt);
			\filldraw (0,-1) circle (2pt);
			\filldraw (0,1) circle (2pt);
			\filldraw (2,0-\S*2) circle (2pt);
			\filldraw (1,0.5-\S*1) circle (2pt);
			\filldraw (0.66,0-\S*0.66) circle (2pt);
			\draw (0,1) -- (0,-1);
			\draw (0,1) -- (2,0-\S*2);
			\draw (0,0) -- (2,0-\S*2);
			\draw (0,-1) -- (1,0.5-\S*1);
		\end{tikzpicture}
		\caption{$M(K_5\backslash e)$}
	\end{subfigure}
	\begin{subfigure}{0.3\textwidth}
		\begin{tikzpicture}[scale=0.7]%K_{3,3}^*
			\tikzmath{\S=0.5;}
		    \draw (-2.5,-1.5) -- (-2.5,1.5) -- (0,1.5) -- (1.25,1.5-\S*2.5) -- (1.25,-1.5-\S*2.5) -- (0,-1.5) -- cycle;
			\draw (0,-1.5) -- (0,1.5);

			\draw (-2,0) -- (0,-1) -- (1,-1) -- (0,1) -- cycle;
			\draw [white, line width=4pt]  (-1,0.5) -- (0.5,-1);
			\draw (-1,0.5) -- (0.5,-1);
			\draw [white, line width=4pt]  (-1,-0.5) -- (0.5,0);
			\draw (-1,-0.5) -- (0.5,0);
			
			\filldraw (0,-1) circle (2pt);
			\filldraw (0,1) circle (2pt);
			\filldraw (-2,0) circle (2pt);
			\filldraw (-1,0.5) circle (2pt);
			\filldraw (-1,-0.5) circle (2pt);
			\filldraw (1,-1) circle (2pt);
			\filldraw (0.5,-1) circle (2pt);
			\filldraw (0.5,0) circle (2pt);

			\filldraw (-0.25,-0.25) circle (2pt);
			
		%	\filldraw (0,-1) circle (2pt);
		%	\filldraw (0,1) circle (2pt);
		%	\filldraw (-2,1) circle (2pt);
		%	\filldraw (-1,1) circle (2pt);
		%	\filldraw (-1,0.5-\S*1) circle (2pt);
		%	\filldraw (-0.66,0-\S*0.66) circle (2pt);
		%	\draw (0,1) -- (0,-1);
		%	\draw (0,1) -- (-2,0-\S*2);
		%	\draw (0,0) -- (-2,0-\S*2);
		%	\draw (0,-1) -- (-1,0.5-\S*1);
		\end{tikzpicture}
		\caption{$M^*(K_{3,3})$}
	\end{subfigure}

	\begin{subfigure}{0.3\textwidth}
		\begin{tikzpicture}%P(K_4,K_4)
			\tikzmath{\S=0.5;}
		    \draw (-2.5,-1.5-\S*2.5) -- (-2.5,1.5-\S*2.5) -- (0,1.5) -- (0,-1.5) -- cycle;
		    \draw (0,0.75+\S*0.75) -- (0.75,0.75+\S*1.5) -- (2.25,-0.75+\S*1.5) -- (0.75,-0.75) -- (0,0);
		    % \draw (3,1+\S*4) -- (5,-1+\S*4) -- (1,-1) -- (-1,1) -- cycle;
			
			\filldraw (0,0) circle (2pt);
			\filldraw (-1,0.5-\S*1) circle (2pt);
			\filldraw (-1,-0.5-\S*1) circle (2pt);
			\filldraw (-1.33,0-\S*1.33) circle (2pt);
			\filldraw (-2,1-\S*2) circle (2pt);
			\filldraw (-2,-1-\S*2) circle (2pt);
			\draw (0,0) -- (-2,1-\S*2);
			\draw (0,0) -- (-2,-1-\S*2);
			\draw (-1,-0.5-\S*1) -- (-2,1-\S*2);
			\draw (-1,0.5-\S*1) -- (-2,-1-\S*2);

			\filldraw (0,0) circle (2pt);
			\filldraw (0.25,0.25+\S*0.5) circle (2pt);
			\filldraw (0.75,-0.25+\S*0.5) circle (2pt);
			\filldraw (0.66,\S*0.66) circle (2pt);
			\filldraw (0.5,0.5+\S*1) circle (2pt);
			\filldraw (1.5,-0.5+\S*1) circle (2pt);
			\draw (0,0) -- (0.5,1.5-\S*1);
			\draw (0,0) -- (1.5,0.5-\S*1);
			\draw (0.75,0.25-\S*0.5) -- (0.5,1.5-\S*1);
			\draw (0.25,0.75-\S*0.5) -- (1.5,0.5-\S*1);
			
			% \filldraw (0,0) circle (2pt);
			% \filldraw (0.25,0.75-\S*0.5) circle (2pt);
			% \filldraw (0.75,0.25-\S*0.5) circle (2pt);
			% \filldraw (0.66,0.66-\S*0.66) circle (2pt);
			% \filldraw (0.5,1.5-\S*1) circle (2pt);
			% \filldraw (1.5,0.5-\S*1) circle (2pt);
			% \draw (0,0) -- (0.5,1.5-\S*1);
			% \draw (0,0) -- (1.5,0.5-\S*1);
			% \draw (0.75,0.25-\S*0.5) -- (0.5,1.5-\S*1);
			% \draw (0.25,0.75-\S*0.5) -- (1.5,0.5-\S*1);

		%----------------

			% \filldraw (0,0) circle (2pt);
			% \filldraw (1,0.5+\S*1) circle (2pt);
			% \filldraw (1,-0.5+\S*1) circle (2pt);
			% \filldraw (1.33,0+\S*1.33) circle (2pt);
			% \filldraw (2,1+\S*2) circle (2pt);
			% \filldraw (2,-1+\S*2) circle (2pt);
			% \draw (0,0) -- (2,1+\S*2);
			% \draw (0,0) -- (2,-1+\S*2);
			% \draw (1,-0.5+\S*1) -- (2,1+\S*2);
			% \draw (1,0.5+\S*1) -- (2,-1+\S*2);
		\end{tikzpicture}
		\caption{$P(M(K_4), M(K_4))$}
	\end{subfigure}
	\caption{The minor-minimal simple matroids not in $\cM_2$}
\end{figure}

\begin{theorem}
\label{mainone}
	The minor-minimal simple matroids that cannot be covered by two independent sets are 
	$U_{2,5}$, $P(U_{2,4},U_{2,4})$, $O_7$, $P_7$, $F_7^-$, $F_7$, $P(U_{2,4},M(K_4))$,   $M(K_5\del e)$,  $M^*(K_{3,3})$, and $P(M(K_4),M(K_4))$.		%the $|M|\leq11$.
\end{theorem}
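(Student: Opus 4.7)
The plan is to use Edmonds' theorem to convert the problem into a density-based classification, verify the ten listed matroids, and then classify all minor-minimal simple obstructions by induction on rank.

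By Theorem~\ref{edmonds}, $M \notin \cM_2$ is witnessed by some $A \subseteq E(M)$ with $|A| > 2r(A)$. Minor-minimality forces $A = E(M)$, for otherwise $M|A$ would be a smaller simple counterexample; hence $d(M) > 2$ and $|E(M)| \ge 2r(M) + 1$. Since each $M \setminus e \in \cM_2$, Edmonds gives $|E(M)| - 1 \le 2r(M \setminus e)$, and the absence of coloops (which would violate $d(M) > 2$) yields $|E(M)| = 2r(M) + 1$. Moreover, every proper flat $F$ satisfies $|F| \le 2r(F)$, and the condition $\si(M/e) \in \cM_2$ for every $e$ translates into density bounds on collections of rank-$2$ flats through $e$: in particular, $e$ must lie either on a line of size at least $4$ or on at least two lines of size at least $3$.

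Verification of the ten matroids consists of checking $|E| = 2r + 1$ and Edmonds' inequality on every single-element deletion and simplified contraction. For the three parallel connections on the list, the identities $|E(P(M_1, M_2))| = |E(M_1)| + |E(M_2)| - 1$ and $r(P(M_1, M_2)) = r(M_1) + r(M_2) - 1$, combined with the fact that $U_{2,4}$ and $M(K_4)$ are density-critical of density exactly $2$ (so proper restrictions satisfy the strict inequality $|A| \le 2r(A) - 1$), reduce the check on restrictions of $P$ containing the pivot to a routine density computation; restrictions not containing the pivot form direct sums of matroids in $\cM_2$ and are automatic.

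The classification then proceeds by induction on $r(M)$. For $r = 2$, $|E| = 5$ forces $M = U_{2,5}$. For $r = 3$, $|E| = 7$ and every line has at most four points; the simple rank-$3$ matroids on seven elements with this property are $U_{3,7}, F_7, F_7^-, P_7, O_7$, and $U_{3,7}$ is excluded because $\si(U_{3,7}/e) = U_{2,6}$ contains $U_{2,5}$ as a restriction. For $r \ge 4$, I would distinguish whether $M$ is $3$-connected. If $M$ has a non-trivial $2$-separation, then $M = P(M_1, M_2)$ with each $M_i$ a proper restriction of $M$; the identity $|E(M_1)| + |E(M_2)| = 2r(M_1) + 2r(M_2)$ that follows from $|E(M)|=2r(M)+1$, combined with $|E(M_i)| \le 2r(M_i)$ from $M_i \in \cM_2$, forces $|E(M_i)| = 2r(M_i)$ for both summands, and a further argument yields that each $M_i$ is density-critical. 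Classifying connected density-critical matroids of density exactly~$2$ as precisely $U_{2,4}$ and $M(K_4)$ then produces the three parallel connections on the list. If $M$ is $3$-connected, the line-structure constraints combined with a small-rank enumeration produce $M(K_5\setminus e)$ and $M^*(K_{3,3})$ in rank~$4$, and no $3$-connected examples survive in rank $\ge 5$.

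The main obstacle is twofold: classifying connected density-critical matroids of density exactly $2$ as $U_{2,4}$ and $M(K_4)$ (needed for the $2$-separation case), and ruling out $3$-connected examples for ranks $4$ and beyond. Both rely on turning the element-wise Edmonds condition on rank-$2$ flats into a coherent global obstruction, and the rank-$4$ enumeration in particular requires careful case analysis on the sequences of line-sizes through each element, guided by the density bound on $\si(M/e)$.
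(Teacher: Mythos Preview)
Your connectivity-split strategy is genuinely different from the paper's argument, which never separates into $3$-connected versus $2$-separated cases. Instead, the paper builds an auxiliary graph on $E(M)$ in which two elements are adjacent when they share a triangle, shows each component carries a wheel or whirl restriction (via Lemma~\ref{ww0}), bounds the rank of such restrictions to at most three (wheel) or two (whirl), and then extends the wheel/whirl by a fan of triangles until the size bound $|E(M)|=2r(M)+1$ is saturated. The ten matroids drop out of this structural analysis without any appeal to the classification of density-critical matroids of density exactly $2$.

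Your outline has a concrete error and two substantial gaps. The error is in the rank-$3$ enumeration: your list $\{U_{3,7}, F_7, F_7^-, P_7, O_7\}$ is not the full set of simple rank-$3$ matroids on seven points with no $5$-point line and every element in at least two triangles. In particular, $P(U_{2,4},U_{2,4})$ has rank $3$, seven elements, two $4$-point lines meeting in a point, and satisfies all of your stated constraints, yet it is absent from your list. Since it is one of the ten target matroids, your proof as written would fail to produce it. (You only invoke the $2$-separation split for $r\ge 4$, so it cannot be caught there.) There are further rank-$3$ configurations you would also need to exclude.

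The first gap is in the $2$-separated case: a $2$-separation of a simple matroid gives a $2$-sum $M_1\oplus_2 M_2$, not automatically a parallel connection; you need the basepoint to lie in $E(M)$, which follows from a counting argument but is not stated. More seriously, you assert that ``a further argument yields that each $M_i$ is density-critical,'' reducing to the classification of connected density-critical matroids of density exactly $2$. But the $M_i$ are only known to be simple, connected, with $|E(M_i)|=2r(M_i)$ and every element except possibly the basepoint in at least two triangles; deducing density-criticality from this, or directly forcing $M_i\in\{U_{2,4},M(K_4)\}$, requires real work that is not supplied (and roughly parallels the rank-$\le 4$ analysis in Corollary~\ref{F3} plus an argument for higher rank). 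The second gap is the $3$-connected case for rank $\ge 4$: the assertion that enumeration yields exactly $M(K_5\backslash e)$ and $M^*(K_{3,3})$ in rank $4$ and nothing in rank $\ge 5$ is exactly the hard content of the theorem and is not argued.
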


The following consequence of Theorem~\ref{edmonds} will be helpful.

\begin{lemma}
\label{*}
Let $M$ be a minor-minimal matroid that cannot be covered by $k$ independent sets. Then 
$$k\,r(M) = |E(M)| - 1.$$
Moreover, $M$ has no coloops.
\end{lemma}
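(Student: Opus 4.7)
The plan is to extract both the equation and the coloop conclusion directly from Edmonds' criterion (Theorem~\ref{edmonds}), using the minor-minimality to control where the inequality in Edmonds can fail. Note first that the empty matroid lies in $\cM_k$, so the hypothesis forces $|E(M)| \ge 1$.

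For the lower bound on $k\,r(M)$, pick any $e \in E(M)$. Then $M \setminus e$ is a proper minor of $M$, hence lies in $\cM_k$. Applying Edmonds' inequality to the ground set of $M \setminus e$, and using that deletion does not change rank on that subset, gives
\[
k\,r(M) \;\ge\; k\,r\!\bigl(E(M)\setminus\{e\}\bigr) \;\ge\; |E(M)| - 1.
\]
For the upper bound, since $M \notin \cM_k$, Theorem~\ref{edmonds} supplies a subset $A \subseteq E(M)$ with $k\,r(A) < |A|$. If $A$ were a proper subset of $E(M)$, then for any $e \in E(M)\setminus A$ the same $A$ would witness $k\,r_{M\setminus e}(A) < |A|$, contradicting $M\setminus e \in \cM_k$. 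Hence $A = E(M)$, and so $k\,r(M) \le |E(M)| - 1$. The two bounds give the stated equality.

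For the coloop claim, suppose $e$ were a coloop of $M$. Then $r(M\setminus e) = r(M) - 1$, and Edmonds' inequality applied to the full ground set of $M\setminus e \in \cM_k$ together with the equality just proved gives
\[
k\,r(M) - k \;=\; k\,r(M\setminus e) \;\ge\; |E(M\setminus e)| \;=\; |E(M)| - 1 \;=\; k\,r(M),
\]
forcing $k \le 0$, a contradiction. The only subtlety in the whole argument, and the step that merits the most care, is the observation that minor-minimality forces the bad set in Edmonds' condition to be all of $E(M)$; everything else is routine manipulation of rank and cardinality.
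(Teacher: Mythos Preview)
Your proof is correct and follows essentially the same route as the paper's. The paper compresses the argument into a single chain of inequalities $|E(M)| > k\,r(M) \ge k\,r(M\del x) \ge |E(M\del x)| = |E(M)|-1$ and reads off both the equation and the equality $r(M)=r(M\del x)$ (hence no coloops) simultaneously; you instead separate the upper and lower bounds and then handle coloops by a short contradiction. In fact you are more explicit than the paper on the one nontrivial point: the paper asserts $|E(M)| > k\,r(M)$ without comment, while you justify it by showing that the violating set in Edmonds' criterion must be all of $E(M)$.
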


For a matroid $M$, we write $\varepsilon(M)$ for $|E(\si(M))|$, the number of rank-one flats of $M$. The \emph{density} $d(M)$ of  $M$  is $\tfrac{\varepsilon(M)}{r(M)}$ unless $r(M) = 0$.    In the exceptional case,  $\varepsilon(M) = 0$ and  we define $d(M) = 0$. We say that $M$ is {\em density-critical} when $d(N) < d(M)$ for all proper minors $N$ of $M$. Note that all density-critical matroids are simple.
By Lemma~\ref{*} and Theorem~\ref{edmonds},  $M$ is a minor-minimal simple matroid that cannot be covered by $k$ independent sets if and only if $d(M) >k$ but $d(N) \le k$ for all proper minors $N$ of $M$. Such matroids are strictly $k$-density-critical where, for $t\geq 0$,
 we say a matroid is \emph{strictly  $t$-density-critical}
 when its density is  strictly  greater than $t$ 
while all its proper minors have density at most $t$. Thus Theorem~\ref{mainone} explicitly determines all ten strictly $2$-density-critical matroids. 

We propose the following. 

\begin{conjecture}\label{fin_obst_conj}
	For all positive integers $k$, there are finitely many minor-minimal simple matroids that cannot be covered by $k$ independent sets.
\end{conjecture}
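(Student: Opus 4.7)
The plan is to reduce Conjecture~\ref{fin_obst_conj} to a rank-bounding question and then attempt a connectivity decomposition. By Lemma~\ref{*}, every strictly $k$-density-critical simple matroid $M$ satisfies $|E(M)| = k\,r(M) + 1$, so the class is finite precisely when the rank of its members is bounded in terms of $k$. Thus it suffices to find a function $f(k)$ such that $r(M) \le f(k)$ for every strictly $k$-density-critical matroid $M$.

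The second step is to handle members of low connectivity by decomposing them. The appearance of $P(U_{2,4},U_{2,4})$, $P(U_{2,4},M(K_4))$, and $P(M(K_4),M(K_4))$ in Theorem~\ref{mainone} suggests that parallel connection is the right operation: if $M$ is strictly $k$-density-critical and not $3$-connected, I would aim to show that $M = P(M_1,M_2)$ for two density-critical matroids $M_1$ and $M_2$ of strictly smaller rank. Using the identities $r(P(M_1,M_2)) = r(M_1) + r(M_2) - 1$ and $\varepsilon(P(M_1,M_2)) = \varepsilon(M_1) + \varepsilon(M_2) - 1$ together with Lemma~\ref{*}, one can track how the density condition transfers from $M$ to the factors; the aim is to reduce the conjecture, by induction on rank, to the $3$-connected case.

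The heart of the problem, and the step I expect to be the main obstacle, is bounding the rank of a $3$-connected strictly $k$-density-critical matroid. The density condition here is extremely tight: $d(M) = k + 1/r(M)$ tends to $k$ as $r(M)$ grows, while every proper minor has density at most $k$. My preferred attack would use Tutte's Wheels-and-Whirls Theorem to produce an element $e$ such that $M\del e$ or $M/e$ is still $3$-connected, and then exploit control over how $\varepsilon$ changes under these minor operations, using that $\varepsilon(M/e)$ equals the number of lines of $M$ through $e$. The difficulty is that the permitted drop in density per step is only $O(1/r(M)^2)$, so naive counting does not close the argument. Proving a clean recurrence likely requires a growth-rate result in the spirit of Geelen--Kabell--Whittle, and the fully non-representable case looks genuinely hard. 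In the representable case it may be possible to push through using the matroid-minors program of Geelen, Gerards, and Whittle, although even there a uniform bound over all fields is not automatic.
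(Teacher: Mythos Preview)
The statement you are attempting to prove is stated in the paper as a \emph{conjecture}, not a theorem; the paper gives no proof of it. The only progress the paper records toward Conjecture~\ref{fin_obst_conj} is the remark that it (and the stronger Conjectures~\ref{fin_tdc_conj}--\ref{fin_dc_conj}) holds within any minor-closed class that is well-quasi-ordered, in particular within the class of matroids representable over a fixed finite field, by the announced result of Geelen, Gerards, and Whittle. So there is no ``paper's own proof'' to compare against.

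Your proposal is not a proof but a strategy outline, and you are candid about this: you explicitly say that the $3$-connected case ``looks genuinely hard'' and that ``naive counting does not close the argument.'' That is an accurate assessment, and it is exactly why the statement remains a conjecture. Two further points about your outline. First, your reduction to the $3$-connected case is itself not established: Lemma~\ref{elconn}, which says that a $2$-separation of a density-critical matroid comes from a parallel connection, is proved in the paper only under the hypothesis $d(M)\le 2$, whereas a strictly $k$-density-critical matroid with $k\ge 2$ has $d(M)>2$; you would need a new argument here. Second, even if one could reduce to $3$-connected matroids, the Wheels-and-Whirls approach you sketch gives no obvious handle: a single deletion or contraction drops $\varepsilon$ by at least one and $r$ by at most one, which is far too coarse to exploit the $O(1/r(M))$ gap between $d(M)$ and $k$. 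In short, your proposal correctly identifies the shape of the problem and its main obstacle, but it does not surmount that obstacle, and neither does the paper.
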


More generally, we make  the following conjectures. For $t> 0$,
 we say a matroid is \emph{$t$-density-critical}
 when its density is  at least $t$ 
while all of its proper minors have density strictly less than $t$.

\begin{conjecture}\label{fin_tdc_conj}
	For all $t\geq 0$, there are finitely many strictly $t$-density-critical matroids.
\end{conjecture}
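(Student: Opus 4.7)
The plan is to reduce Conjecture~\ref{fin_tdc_conj} to the assertion that every strictly $t$-density-critical matroid $M$ has $r(M) \leq r^*(t)$ for some function $r^*$. Granting this rank bound, one first argues as in Lemma~\ref{*} that $M$ is coloop-free (for $t\ge 1$, which is the only interesting case: for $t<1$ the only strictly $t$-density-critical matroid is $U_{1,1}$). Then, for each element $e$, the density bound $d(M\del e)\le t$, together with $r(M\del e)=r(M)$ and $\varepsilon(M\del e)=\varepsilon(M)-1$, gives
$$\varepsilon(M) \le t\,r(M)+1 \le t\,r^*(t)+1,$$
so $|E(M)|$ is bounded in terms of $t$ and only finitely many such matroids exist up to isomorphism.

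To work toward the rank bound, one can extract structural constraints from single-element contractions. For a non-loop element $e$, the simple matroid $\si(M/e)$ is a proper minor, so $d(\si(M/e))\le t$; since $\varepsilon(\si(M/e))$ equals the number $L_e$ of rank-two flats of $M$ through $e$, this gives $L_e\le t(r(M)-1)$. On the other hand, $\sum_{\ell\ni e}(|\ell|-1) = \varepsilon(M)-1 > t\,r(M)-1$, so on average a line through $e$ contains more than one element other than $e$; in particular, every $e$ lies on a triangle when $t>1$. More generally, for each flat $F$ of rank $i$, the proper minor $\si(M/F)$ has density at most $t$, giving analogous constraints on the rank-$(i+1)$ flats containing $F$. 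Collectively these constraints force a fairly uniform distribution of higher flats over the lower ones, which should become very restrictive as $r(M)$ grows.

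The main obstacle is to convert these local constraints into a global bound on $r(M)$. Unlike most matroid structure theorems, strictly $t$-density-critical matroids need not be $3$-connected: for $t>1$, the parallel connection $P(M_1,M_2)$ of two matroids of common density $t$ has density
$$t + \frac{t-1}{r(M_1)+r(M_2)-1} > t,$$
and $P(U_{2,4},U_{2,4})$ in Theorem~\ref{mainone} is exactly such an example. Any proof must therefore engage directly with $2$-separations, for instance by bounding both the sizes of the $3$-connected pieces and the complexity of the tree that assembles them. An alternative route is to seek a matroid analogue of the structural decompositions used for edge-density-critical graphs; the case $t=2$ of Theorem~\ref{mainone} and the case $t=\tfrac{9}{4}$ promised in the abstract suggest such a decomposition exists, but extending it to arbitrary $t$ appears to require a substantially new structural idea, and this is where I expect the main difficulty to lie.
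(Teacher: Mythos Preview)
This statement is presented in the paper as an open \emph{conjecture}, not a theorem; the paper offers no proof and only remarks that it would follow from well-quasi-ordering results in any minor-closed class where such results are known (e.g., matroids representable over a fixed finite field). So there is no ``paper's own proof'' to compare your proposal against.

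Your proposal, as you yourself say in its final paragraph, is not a proof either: it is a reduction of the conjecture to a uniform rank bound $r(M)\le r^*(t)$, together with some correct but soft observations (coloop-freeness for $t\ge 1$, the element bound $\varepsilon(M)\le t\,r(M)+1$, the contraction inequality $L_e\le t(r(M)-1)$, and the parallel-connection density formula). The reduction to a rank bound is standard and correct, but the entire content of the conjecture lies in establishing that bound, and nothing in your outline makes real progress toward it. The local constraints you extract from single-element contractions are exactly the kind used in the paper's Lemma~\ref{lem0} to get ``every element lies in two triangles'' when $d(M)\ge 2$; they do not by themselves yield any rank bound, as the infinite family $P_n$ (all density-critical, with densities accumulating at $2$) already shows that such constraints are compatible with arbitrarily large rank for densities below $2$. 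Your closing paragraph correctly identifies this as the obstacle, so the honest status of your submission is ``strategy sketch with the key step missing,'' which matches the paper's own position that the statement is open.
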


\begin{conjecture}\label{fin_tdc_conj2}
	For all $t>0$, there are finitely many  $t$-density-critical matroids.
\end{conjecture}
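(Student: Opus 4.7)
The plan is to bound both $r(M)$ and $|E(M)|$ for any $t$-density-critical matroid $M$ in terms of $t$ alone; once this is done, finiteness up to isomorphism is immediate.

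First I would establish a linear bound on $|E(M)|$. Any $t$-density-critical matroid $M$ has $d(N) < t \le d(M)$ for every proper minor $N$ of nonzero rank, so $M$ is density-critical in the sense defined above and in particular simple; hence $|E(M)| = \varepsilon(M) \ge t\, r(M)$. Applying $d(M\setminus e) < t$ to any non-coloop $e$ gives $|E(M)| - 1 < t\, r(M)$, and a short deletion argument along the lines of Lemma~\ref{*} rules out coloops whenever $t \ge 1$; the residual range $0 < t < 1$ is degenerate and handled directly, since already $U_{1,1}$ has density $1 > t$. Combining the two inequalities pins $|E(M)| = \lceil t\, r(M) \rceil$, a linear bound in $r(M)$. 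As a by-product, since $U_{2,n}$, $M(K_n)$, and $\PG(n-1,q)$ all have density at least $n/2$, none of them can occur as a proper minor of $M$ once $n \ge \lceil 2t \rceil + 1$.

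The main step, and where I expect the real difficulty to lie, is bounding $r(M)$. My approach is to invoke a matroid Ramsey theorem of the following form: for every $n$ there exists $f(n)$ such that every simple matroid of rank at least $f(n)$ has a minor isomorphic to $U_{2,n}$, $M(K_n)$, or $\PG(n-1,q)$ for some prime power $q$. Choosing $n > 2t$ and $r(M) > f(n)$ would force $M$ to have a \emph{proper} minor of density at least $t$, contradicting criticality, so $r(M) \le f(\lceil 2t \rceil + 1)$; combined with the element bound above this completes the proof. The obstacle is that I am not certain such a clean Ramsey statement is available in full generality for arbitrary matroids. A natural substitute is the Growth Rate Theorem of Geelen--Kung--Whittle, fed through a local averaging argument that uses $d(M/e) < t$ for every $e \in E(M)$ to control the number of long lines of $M$ through $e$, and hence bound $r(M)$ directly. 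I expect the averaging step, rather than the invocation of external results, to be the truly delicate part.
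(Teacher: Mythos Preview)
The statement you are attempting to prove is listed in the paper as a \emph{conjecture}; the paper gives no proof, and remarks only that it holds within any minor-closed class that is well-quasi-ordered, in particular within the $\GF(q)$-representable matroids for each fixed $q$. So there is no ``paper's own proof'' to compare against, and your task is really to settle an open problem.

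Your element count is fine: for $t>1$ a $t$-density-critical matroid is simple, has no coloops, and satisfies $t\,r(M)\le |E(M)| < t\,r(M)+1$, while the range $0<t\le 1$ collapses to $U_{1,1}$ exactly as you say. The gap is entirely in the rank bound. The Ramsey statement you propose --- that every simple matroid of sufficiently large rank has a $U_{2,n}$-, $M(K_n)$-, or $\PG(n-1,q)$-minor --- is false as stated: the free matroid $U_{r,r}$ has arbitrarily large rank and none of these minors for $n\ge 3$. You might hope to repair it by adding the hypothesis $d(M)\ge t$, but no such result is known for general matroids, and indeed a theorem of that shape would immediately resolve the conjecture. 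The Growth Rate Theorem goes in the wrong direction for your purposes: it gives \emph{upper} bounds on $\varepsilon(M)$ in terms of $r(M)$ for matroids in a minor-closed class excluding some $U_{2,\ell}$, whereas you need to bound $r(M)$ from above given that $\varepsilon(M)\approx t\,r(M)$ and no proper minor is dense. Excluding $U_{2,\ell}$ (which you do get, for $\ell$ roughly $2t$) only tells you $\varepsilon(M)$ is at most exponential in $r(M)$, which is no constraint at all against the linear lower bound.

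The local averaging idea you sketch at the end runs into the same wall: controlling the number of long lines through each point via $d(M/e)<t$ is exactly the kind of local-to-global step that is not known to force bounded rank in arbitrary matroids, because matroids are not well-quasi-ordered and there is no general structure theorem to fall back on. In short, your first paragraph is correct and routine, but the second paragraph requires a tool that does not currently exist; this is precisely why the statement remains a conjecture.
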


We  also propose the following weakening of the last conjecture.

\begin{conjecture}\label{fin_dc_conj}
	For all $t\geq 0$, there are finitely many density-critical matroids with density exactly $t$.
\end{conjecture}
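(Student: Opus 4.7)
The plan is to reduce Conjecture~\ref{fin_dc_conj} to a rank bound. Since the number of simple matroids on any bounded ground set is finite, and $|E(\si(M))| = \varepsilon(M) = t\,r(M)$ for a simple matroid of density $t$, it suffices to bound $r(M)$ in terms of $t$ over all density-critical $M$ with $d(M) = t$. The case $t = 0$ is immediate: $d(M) = 0$ forces $\varepsilon(M) = 0$, and for a simple matroid this forces $r(M) = 0$, leaving only the empty matroid. If $t$ is irrational then no matroid has density $t$ and the claim is vacuous. Thus one may assume $t = p/q > 0$ with $\gcd(p,q) = 1$, in which case $q \mid r(M)$.

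First I would extract the local constraints from density-criticality. For every element $e \in E(M)$ (necessarily a non-loop, since $M$ is simple), density-criticality gives $d(M/e) < t$. Using $r(M/e) = r(M) - 1$ and the fact that $\varepsilon(M/e)$ equals the number of rank-two flats of $M$ containing $e$, this becomes
$$\bigl|\{L : L \text{ is a rank-two flat of } M,\ e \in L\}\bigr| < t\,(r(M) - 1)$$
for every $e$. Since each pair of distinct rank-one flats lies in a unique rank-two flat, double-counting across $E(\si(M))$ forces most rank-two flats to be large once $r(M)$ is large, giving a strong structural restriction on dense enough density-critical matroids.

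Next I would either attempt a direct induction---showing that in every density-critical matroid of density $t$ and sufficiently large rank one can locate a proper minor of density at least $t$, contradicting criticality---or route the argument through the stronger Conjecture~\ref{fin_tdc_conj2}. The reduction to that conjecture is immediate: a density-critical matroid with $d(M) = t$ is automatically $t$-density-critical (density at least $t$, all proper minors of density strictly less than $t$), so Conjecture~\ref{fin_tdc_conj2} implies Conjecture~\ref{fin_dc_conj}. The methods the authors develop for $k = 2$ (Theorem~\ref{mainone}) and for $k = 9/4$ should be the natural template; one would try to generalize those arguments to arbitrary rational $t$.

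The main obstacle, I expect, is the absence of any general well-quasi-ordering of matroids under the minor relation, which rules out any compactness-style finiteness proof and forces one to produce explicit rank bounds. Concretely, one must rule out infinite families of density-critical matroids of density exactly $t$ constructed by iterated parallel connections or $2$-sums of small density-$t$ blocks; showing that such composite constructions can never be density-critical (or that they collapse to finitely many normal forms) seems to be the crux. A plausible intermediate step is to restrict first to matroids representable over a fixed finite field, where the Geelen--Gerards--Whittle minor theorem provides WQO, and then to handle non-representable density-critical matroids by direct structural analysis using the rank-two-flat inequality above.
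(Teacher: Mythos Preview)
This statement is Conjecture~\ref{fin_dc_conj} in the paper and is not proved there; the paper merely proposes it alongside Conjectures~\ref{fin_obst_conj}--\ref{fin_tdc_conj2} and remarks that all four hold within any minor-closed class that is well-quasi-ordered (in particular, by the announced Geelen--Gerards--Whittle result, within matroids representable over a fixed finite field). There is therefore no proof in the paper to compare your proposal against.

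Your proposal is likewise not a proof. You correctly dispose of the trivial cases ($t=0$ and $t$ irrational), correctly derive the local inequality $\varepsilon(M/e) < t\,(r(M)-1)$ from density-criticality, and correctly observe that Conjecture~\ref{fin_tdc_conj2} implies Conjecture~\ref{fin_dc_conj}. But from that point on you offer only a program: attempt to generalize the ad hoc arguments the paper uses for $t=2$ and $t=\tfrac{9}{4}$, or fall back on well-quasi-ordering in the representable case and hope for a separate structural argument otherwise. You yourself name the central obstacle---the absence of a minor well-quasi-ordering for general matroids---and you do not close it. In particular, your proposed intermediate step of ruling out infinite families built from iterated parallel connections or $2$-sums of small density-$t$ blocks is precisely the hard part, and nothing in the sketch indicates how to carry it out; the paper's own arguments for $t \le \tfrac{9}{4}$ depend heavily on the fact that the only density-critical matroids below density~$2$ are series-parallel, and no analogous structure is available for general~$t$.

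The honest summary is that the statement is open in the paper, and what you have written is a reasonable outline of first moves rather than a proof.
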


We note that these conjectures hold over any class of matroids that is well-quasi-ordered with respect to minors.
In particular, by a  result announced by Geelen, Gerards, and Whittle (see, for example, \cite{GGW}), these
conjectures hold within the class of matroids representable over a fixed finite field.

Because the two excluded minors for series-parallel networks, $U_{2,4}$ and $M(K_4)$, have density exactly two, for $k < 2$, all   density-critical matroids of density at most $k$ are series-parallel networks.  For $k > 2$, finding all density-critical matroids of density at most $k$ does not seem straightforward.
However, we were able to solve this problem when $k= \tfrac{9}{4}$. For all $n \ge 2$, we denote by $P_n$   any matroid that can be constructed from $n$ copies of $M(K_3)$ via a sequence of $n-1$ parallel connections. In particular, $P_2 \cong M(K_4 \backslash e)$. There are two choices for $P_3$ depending on which element of $M(K_4 \backslash e)$ is used as the basepoint of the parallel connection with the third copy of $M(K_3)$.   We denote by $M_{18}$  the $18$-element matroid that is obtained by attaching, via parallel connection, a copy of $M(K_4)$  at each element of an $M(K_3)$.

\begin{theorem}
\label{9/4}
The following is a list of all pairs $(M,d)$ where $M$ is a density-critical matroid of density $d$ and    $d \le \tfrac{9}{4}$: \\
$(U_{1,1},1)$, $(U_{2,3},\tfrac{3}{2})$,  
$(M\left(P_n\right),\tfrac{2n+1}{n+1})$ for all $n \ge 2$, $(U_{2,4}, 2)$, 
 $(M(K_4),2)$, $(P(M(K_4),M(K_4)),\tfrac{11}{5})$, $(P(U_{2,4}, M(K_4)),\tfrac{9}{4})$, $(M(K_5\backslash e),\tfrac{9}{4})$,  $(M^*(K_{3,3}),\tfrac{9}{4})$, $(M_{18},\tfrac{9}{4})$.  
\end{theorem}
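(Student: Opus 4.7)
The plan is to stratify density-critical matroids $M$ with $d(M) \le \tfrac{9}{4}$ into three density ranges. First, for $d(M)<2$, since the excluded minors $U_{2,4}$ and $M(K_4)$ for series-parallel networks both have density exactly $2$, such $M$ is a series-parallel network. I would classify the density-critical simple series-parallel matroids by induction on rank: the base case $r(M)=1$ gives $U_{1,1}$; for $r(M)\ge 2$, since $M$ is simple and series-parallel there is a $2$-cocircuit $\{e,f\}$, and simplifying $M/e$ yields a smaller simple series-parallel matroid with strictly smaller density. Reversing this reduction and tracking the effect on $\varepsilon$ and $r$ shows that the only possibilities are $U_{2,3}$ and the family $M(P_n)$ for $n\ge 2$, producing the densities $\tfrac{2n+1}{n+1}$. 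For the boundary $d(M)=2$, density-criticality forces every proper minor to have density $<2$, hence to be series-parallel, so $M$ is a minor-minimal non-series-parallel matroid, yielding $U_{2,4}$ or $M(K_4)$.

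For $2 < d(M) \le \tfrac{9}{4}$, the key input is Theorem~\ref{mainone}. Let $N$ be a minor-minimal minor of $M$ with $d(N)>2$; then $N$ is strictly $2$-density-critical, so $N$ is one of the ten matroids in Theorem~\ref{mainone}, whose densities lie in $\{\tfrac{11}{5}, \tfrac{9}{4}, \tfrac{7}{3}, \tfrac{5}{2}\}$. If $N=M$, the constraint $d(M)\le\tfrac{9}{4}$ leaves precisely the four matroids $P(M(K_4),M(K_4))$ (of density $\tfrac{11}{5}$) and $P(U_{2,4},M(K_4))$, $M(K_5\setminus e)$, $M^*(K_{3,3})$ (all of density $\tfrac{9}{4}$). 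Otherwise $N$ is a proper minor of $M$, and density-criticality of $M$ forces $d(N) < d(M) \le \tfrac{9}{4}$; the only member of the ten with density strictly below $\tfrac{9}{4}$ is $P(M(K_4),M(K_4))$, so $N = P(M(K_4),M(K_4))$ and $\tfrac{11}{5} < d(M) \le \tfrac{9}{4}$.

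The main obstacle is then the final classification step: showing that $M_{18}$ is the unique density-critical matroid with $\tfrac{11}{5} < d(M) \le \tfrac{9}{4}$ that properly contains $P(M(K_4),M(K_4))$ as a minor. My plan is to exploit the $2$-separation of $P(M(K_4),M(K_4))$ at its parallel-connection basepoint, combined with the tight counting inequality $\varepsilon(M) \le \tfrac{9}{4}\,r(M)$ and the density-criticality constraint that each single-element deletion or contraction strictly drops density. A case analysis on how the elements of $E(M)\setminus E(P(M(K_4),M(K_4)))$ attach relative to the basepoint should show that the only admissible extension resolves the basepoint into an $M(K_3)$ and attaches a third $M(K_4)$ at its new edge, thereby yielding precisely $M_{18}$. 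The narrowness of the interval $(\tfrac{11}{5},\tfrac{9}{4}]$ together with the rigidity imposed by density-criticality is what makes this uniqueness statement plausible; handling this step cleanly is where the bulk of the work will lie.
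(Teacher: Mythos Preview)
Your reduction via Theorem~\ref{mainone} is a genuinely different route from the paper's. The paper does not invoke Theorem~\ref{mainone} at all for the $d(M)\ge 2$ range; instead it builds a chain of structural lemmas (Lemma~\ref{lem0} gives two triangles through every element; Lemma~\ref{F3.1} then forces either a $U_{2,4}$- or $M(K_4)$-restriction through every element, or a contraction preserving the two-triangle property; Lemmas~\ref{F8.1} and~\ref{F4} finish each branch). Your argument correctly short-circuits most of this, isolating the problem to: $M$ density-critical, $\tfrac{11}{5}<d(M)\le\tfrac{9}{4}$, with $P(M(K_4),M(K_4))$ as a proper minor.

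The gap is in this final step. Your plan analyzes ``how the elements of $E(M)\setminus E(P(M(K_4),M(K_4)))$ attach relative to the basepoint,'' which tacitly treats $P(M(K_4),M(K_4))$ as a \emph{restriction} of $M$. It is only guaranteed to be a minor, and in the target matroid $M_{18}$ it is \emph{not} a restriction: the two copies of $M(K_4)$ in $M_{18}$ sit at distinct vertices $p_1,p_2$ of the central triangle, and only after contracting $p_3$ do their basepoints merge. Thus the $2$-separation of $P(M(K_4),M(K_4))$ does not lift to a $2$-separation of $M$, and there is no basepoint in $M$ around which to organize your case analysis. Nor does the density window alone bound the rank: $(r,|E|)=(8,18),(9,20),(12,27),(13,29),\ldots$ are all arithmetically compatible with $\tfrac{11}{5}<|E|/r\le\tfrac{9}{4}$, so a naive enumeration does not terminate. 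Something comparable in strength to the paper's Lemmas~\ref{F8.1} and~\ref{F4} seems unavoidable here. Separately, your $d(M)<2$ sketch does not close: $\si(M/e)$ need not itself be density-critical, so the induction hypothesis does not apply to it, and ``reversing the reduction'' does not by itself force the $P_n$ shape. The paper handles this case via the Cunningham--Edmonds tree decomposition together with Lemma~\ref{elconn}, which forces every circuit-labelled vertex to be a triangle and every cocircuit-labelled vertex to contain exactly one element of $M$; this is what yields $P_n$.
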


%For $t\geq 0$,
%we say that $M$ is {\emph (strict) $t$-density critical}
%when $M$ has density is (strictly) greater than $t$,
%yet all of its minors have density (non-)strictly less than $t$.
%Thus we have just demonstrated the following.
%\begin{remark}\label{part_dc_equiv}
%	For integral $t$,
%	the strict $t$-density critical matroids
%	are precisely the minor-minimal simple matroids not in $\cM_t$.
%\end{remark}

\section{Preliminaries}
\label{prelim}

This section proves some preliminary results beginning with  two that were stated in the introduction. 

\begin{proof}[Proof of Proposition~\ref{u1k}.]
Clearly, $U_{1,k+1}$ is a minor-minimal loopless matroid that cannot be covered by $k$ independent sets. Conversely, suppose that $M$ is a minor-minimal loopless matroid that cannot be covered by $k$ independent sets.
Certainly, $M$ contains some element $e$. 
    Let $P\cup \{e\}$  be the parallel class of $M$ that contains $e$ where $P = \{e_1,e_2,\dots,e_\ell\}$ and $e \not \in P$. 
    Now $M\con e \del P$ is loopless, so, by minimality, $M\con e \del P$ can be covered by $k$ independent sets $\{A_1,A_2,\dots,A_k\}$. 
    Note that each $A_i\cup\{e\}$ is independent in $M$,
    so if $|P|=\ell\leq k-1$, then $\{A_1\cup\{e_1\},A_2\cup\{e_2\},\dots,A_\ell\cup\{e_\ell\},A_{\ell+1}\cup \{e\},\dots,A_k\cup \{e\}\}$  is a set of $k$ independent sets that covers $M$.  
    Thus $|P|\geq k$, and so $M \cong U_{1,k+1}$.
\end{proof}

Since $U_{1,k+1}$ is a $(k+1)$-element cocircuit,   the    matroids having no $U_{1,k+1}$-minor are precisely the   matroids for which every cocircuit has at most $k$ elements.

\begin{proof}[Proof of Lemma~\ref{*}.]
Take $x$ in $E(M)$. Then $M\del x$ can be covered by $k$ independent sets. Thus, by Theorem~\ref{edmonds},  
$$|E(M)| > k r(M) \ge kr(M\del x) \ge |E(M\del x)| = |E(M)| - 1.$$ 
We deduce that $kr(M) = |E(M)| - 1$ and $r(M) = r(M\del x)$ so $M$ has no coloops. 
\end{proof}

\begin{lemma}
\label{lem0}
Let $M$ be a density-critical matroid of rank at least two. For each subset $S$ of $E(M)$, 
$$|E(M)| - \varepsilon(M/S) > d(M)r(S).$$
In particular, every element of $M$ is in a triangle and is in at least two triangles when $d(M) \ge 2$. 
\end{lemma}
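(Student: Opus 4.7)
The plan is to apply the definition of density-criticality directly to the contraction minor $M/S$ and then specialize to $S = \{e\}$ to get the consequences about triangles.

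For the main inequality, we use that density-critical matroids are simple (as noted in the paper), so $\varepsilon(M) = |E(M)| = d(M)\,r(M)$. Fix $S \subseteq E(M)$ with $0 < r(S) < r(M)$, so that $M/S$ is a proper minor of positive rank. (The two degenerate cases $S = \emptyset$ and $r(S) = r(M)$ both turn the claimed strict inequality into an equality, so this is the natural range.) Density-criticality yields $d(M/S) < d(M)$, which rearranges to
\begin{equation*}
\varepsilon(M/S) < d(M)\bigl(r(M)-r(S)\bigr) = |E(M)| - d(M)\,r(S),
\end{equation*}
which is the main inequality.

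For the ``in particular'' part, I apply the inequality with $S = \{e\}$ for an arbitrary $e \in E(M)$; this is legal because $r(M) \ge 2$ and $M$ is simple, so $r(\{e\}) = 1$. Using the trivial fact that every simple matroid of positive rank has density at least $1$, one gets $|E(M)| - \varepsilon(M/e) > d(M) \ge 1$, and integrality upgrades this to $|E(M)| - \varepsilon(M/e) \ge 2$. Since $M$ is simple, $M/e$ is loopless; writing $n_1,\ldots,n_k$ for the sizes of its parallel classes gives $|E(M)| - \varepsilon(M/e) = 1 + \sum_i (n_i-1)$, so some $n_i \ge 2$, yielding a triangle of $M$ through $e$. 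When $d(M) \ge 2$, the same argument improves the bound to $\sum_i (n_i-1) \ge 2$, and the number of triangles of $M$ through $e$ equals $\sum_i \binom{n_i}{2}$; the elementary inequality $\binom{n}{2} \ge n-1$ (valid for $n \ge 1$) then gives at least two triangles through $e$.

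The only real care needed is bookkeeping: restricting $S$ to the range where $M/S$ is actually a proper minor of positive rank so that density-criticality applies, and keeping track of simplicity to translate excess parallel elements in $M/e$ into triangles of $M$. There is no deeper conceptual obstacle.
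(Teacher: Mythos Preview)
Your argument is correct and follows essentially the same route as the paper's proof: both derive the main inequality by applying density-criticality to $M/S$ and rearranging, then specialize to $S=\{e\}$ and read off the triangle consequences from the drop $|E(M)|-\varepsilon(M/e)$. You are simply more explicit than the paper in two places---flagging the degenerate cases $r(S)\in\{0,r(M)\}$ where strictness fails, and spelling out the parallel-class bookkeeping (including the inequality $\binom{n}{2}\ge n-1$) that converts the numerical bound into a count of triangles through~$e$.
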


\begin{proof}
Since $M$ is density-critical and therefore simple, 
$$\frac{\varepsilon(M/S)}{r(M/S)} <\frac{\varepsilon(M)}{r(M)}= \frac{|E(M)|}{r(M)}.$$ 
Hence $r(M) \varepsilon(M/S) < |E(M)|(r(M) - r(S))$, so
$$r(M)d(M)r(S) = |E(M)|r(S) < r(M)\left(|E(M)| - \varepsilon(M/S)\right).$$
Thus $d(M)r(S) < |E(M)| - \varepsilon(M/S)$. In particular,   $d(M)  < |E(M)| - \varepsilon(M/e)$ for all $e$ in $E(M)$. 
Hence   every such element $e$ is  in at least one triangle, and $e$ is in at least two triangles   when 
$d(M) \ge 2$.
\end{proof}

The next result will be useful in the proof of Theorem~\ref{mainone}.

\begin{lemma}
\label{ww0}
In a $3$-connected matroid $M$, let $F$ be a $2k$-element set $\{b_1,a_1,b_2,a_2,\dots,b_k,a_k\}$. Suppose $\{b_1,b_2,\dots,b_k\}$ is independent and $\{b_i,a_i,b_{i+1}\}$ is a circuit for all $i$, where $b_{k+1} = b_1$. Then $M|F$ is a wheel of rank at least three or a whirl of rank at least two.
\end{lemma}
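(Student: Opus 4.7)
The plan is to first identify a basis of $M|F$, then inductively build up a family of \emph{arc} circuits going around the cycle, and finally use the \emph{rim} $R=\{a_1,\ldots,a_k\}$ to decide whether $M|F$ is a wheel or a whirl.

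Since $M$ is $3$-connected and the hypothesis forces $r(M)\ge 2$, $M$ is simple, so the $2k$ elements of $F$ are distinct and pairwise non-parallel. Each $a_i$ lies in $\cl(\{b_i,b_{i+1}\})$ by the given triangle, so the independent set $B=\{b_1,\ldots,b_k\}$ spans $F$; hence $B$ is a basis of $M|F$ with $r(M|F)=k$, and the fundamental circuit of $a_i$ relative to $B$ is $\{b_i,a_i,b_{i+1}\}$.

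For each $i$ and each $\ell$ with $1\le \ell<k$, set $A_{i,i+\ell}=\{b_i,a_i,a_{i+1},\ldots,a_{i+\ell-1},b_{i+\ell}\}$, with indices read modulo $k$. I claim that every such arc is a circuit of $M|F$, and I would prove this by induction on $\ell$. The base case $\ell=1$ is the given triangle. For the inductive step, $A_{i,i+\ell}$ and $\{b_{i+\ell},a_{i+\ell},b_{i+\ell+1}\}$ are circuits meeting exactly in $\{b_{i+\ell}\}$, so strong circuit elimination produces a circuit contained in $A_{i,i+\ell+1}$. A rank computation then shows $r(A_{i,i+\ell+1})=\ell+2=|A_{i,i+\ell+1}|-1$: adjoining $a_{i+\ell}$ to $A_{i,i+\ell}$ is redundant (since $a_{i+\ell}\in\cl(\{b_{i+\ell},b_{i+\ell+1}\})$ and $b_{i+\ell}\in A_{i,i+\ell}$), while adjoining $b_{i+\ell+1}$ strictly increases the rank because $b_{i+\ell+1}\notin\cl(A_{i,i+\ell})=\cl(\{b_i,\ldots,b_{i+\ell}\})$. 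Thus $A_{i,i+\ell+1}$ has nullity one and so contains a unique circuit, forcing that circuit to be the whole arc.

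Every $(k-1)$-subset of $R$ is an arc $A_{m+1,m}$ with its two $b$-endpoints removed, hence independent, so $r(R)\in\{k-1,k\}$. If $r(R)=k-1$ then $R$ is itself the unique circuit inside it, and the arcs together with $R$ give the full $k^2-k+1$ circuits of the rank-$k$ wheel $M(W_k)$. If $r(R)=k$ then $R$ is a second basis; strong circuit elimination applied to $A_{1,k}$ and $\{b_k,a_k,b_1\}$ with pivot $b_k$ produces a circuit inside $\{b_1\}\cup R$ that, by a nullity-one count, must equal all of $\{b_1\}\cup R$, and repeating at each $i$ yields the $k$ spanning circuits $\{b_i\}\cup R$, giving the $k^2$ circuits of the rank-$k$ whirl $\mathcal{W}^k$. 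When $k=2$ only this second case can arise, matching the statement's allowance for a whirl of rank two. The main obstacle will be the rank-computation step inside the arc induction and the final circuit-by-circuit match with the wheel or whirl, both of which rely on careful use of the modular law in a simple matroid to ensure that distinct lines $\cl(\{b_i,b_{i+1}\})$ meet in exactly one point.
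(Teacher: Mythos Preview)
The paper's proof is a two-liner and takes a different route from yours: it observes that each $\{a_i,b_{i+1},a_{i+1}\}$ is a triad of $M|F$ (its complement is spanned by $B\setminus\{b_{i+1}\}$ and hence has rank $k-1$, while none of $a_i,b_{i+1},a_{i+1}$ lies in $\cl(B\setminus\{b_{i+1}\})$), and then cites a result of Seymour (see \cite[Lemma~8.8.5(ii)]{JGO11}) that a simple matroid whose ground set forms a closed fan of alternating triangles and triads is a wheel or a whirl.

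Your direct circuit-enumeration plan has a genuine gap precisely at the point you flag as ``the final circuit-by-circuit match.'' Showing that every circuit of $M(W_k)$ (or of $\mathcal W^k$) is a circuit of $M|F$ only makes $M|F$ a rank-preserving weak image of the wheel or whirl; it does not rule out \emph{extra} circuits in $M|F$. Nothing you have written excludes, for instance, a stray triangle such as $\{b_1,a_2,a_4\}$ when $k=4$, and the observation that distinct lines $\cl(\{b_i,b_{i+1}\})$ meet in at most one point is just simplicity---it does not by itself bound the dependencies of $M|F$ from above. Two smaller issues compound this: your rank step in the arc induction is misstated, since $a_{i+\ell}\notin\cl(A_{i,i+\ell})$ (you need $b_{i+\ell+1}$ as well; the clean argument is $\cl(A_{i,i+\ell+1})=\cl(\{b_i,\dots,b_{i+\ell+1}\})$); and ``nullity one'' in $\{b_1\}\cup R$ gives uniqueness of the circuit there but not that it equals the whole set---you need a separate strong-elimination application for each $a_j$ to force $a_j$ into that circuit. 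The natural way to close the main gap is to also establish the triads $\{a_i,b_{i+1},a_{i+1}\}$ and use circuit--cocircuit orthogonality to pin down every circuit of $M|F$; but once you have the triads, you are essentially at the paper's proof and may as well quote Seymour.
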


\begin{proof}
Since $M$ is $3$-connected with at least four elements, it is simple. Now $M|F$ has $\{a_i,b_{i+1},a_{i+1}\}$ as a triad, where $a_{k+1} = a_1$. By a result of Seymour \cite{PDS80} (see also \cite[Lemma 8.8.5(ii)]{JGO11}), $M|F$ is a wheel or a whirl of rank $k$. 
\end{proof}

\section{The  matroids that cannot be covered by two independent sets}

In this section, we prove Theorem~\ref{mainone}  thereby specifying all of the minor-minimal simple matroids that cannot be covered by two independent sets. 

\setcounter{theorem}{1}

\begin{proof}[Proof of Theorem~\ref{mainone}.]
It is straightforward to check that each of the matroids listed is a minor-minimal simple matroid that cannot be covered by two independent sets. 
Now let $M$ be such a matroid. The next two assertions are immediate consequences of  Lemmas~\ref{*}, \ref{lem0}, and \ref{edmonds}. 
However, we include proofs independent of Edmonds's result for completeness.
	\begin{sublemma}\label{triangle_claim}
		Every element of $M$ is contained in at least two triangles.
	\end{sublemma}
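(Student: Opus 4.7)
The plan is to argue by contradiction. Suppose some $e \in E(M)$ lies in at most one triangle; I will construct a covering of $M$ by two independent sets using the minor-minimality of $M$, contradicting the hypothesis.

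First I would dispatch the case that no triangle contains $e$. Since $M$ is simple and no triangle through $e$ exists, the contraction $M/e$ is simple. By minor-minimality of $M$, the proper simple minor $M/e$ is covered by two independent sets $B_1, B_2$. Each $B_i$ is independent in $M/e$, so $B_i \cup \{e\}$ is independent in $M$, and hence $\{B_1 \cup \{e\}, B_2\}$ covers $E(M)$, the desired contradiction.

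The main case is when $e$ lies in a unique triangle $\{e,f,g\}$. Then $\{f,g\}$ is the only nontrivial parallel class of $M/e$, since any further parallel pair $\{x,y\}$ in $M/e$ would make $\{e,x,y\}$ a second triangle through $e$. Hence $\si(M/e) = M/e \setminus g$, taking $f$ as the chosen representative. By minor-minimality, $\si(M/e)$ is covered by two independent sets $B_1, B_2$, which are also independent in $M/e$. After relabeling, I may assume $f \in B_1$. Set $A_2 := B_2 \cup \{e\}$, which is independent in $M$ since $B_2$ is independent in $M/e$, and $A_1 := B_1 \cup \{g\}$. Note that $A_1 \cup A_2 = B_1 \cup B_2 \cup \{e,g\} = E(\si(M/e)) \cup \{e,g\} = E(M)$.

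The crux, and where I expect the main subtlety, is verifying that $A_1 = B_1 \cup \{g\}$ is independent in $M$. Since $B_1 \cup \{e\}$ is independent of rank $|B_1|+1$, and $\{e,f,g\}$ is a circuit with $f \in B_1$, we have $g \in \cl_M(B_1 \cup \{e\})$. Thus $B_1 \cup \{e,g\}$ has size $|B_1|+2$ and rank $|B_1|+1$, so its corank is one and it contains a unique circuit. Since $\{e,f,g\} \subseteq B_1 \cup \{e,g\}$ is such a circuit, it is the unique one. Removing $e$ destroys every circuit, so $B_1 \cup \{g\}$ is independent in $M$. Therefore $\{A_1, A_2\}$ is a $2$-cover of $M$, contradicting the assumption on $M$ and establishing the claim.
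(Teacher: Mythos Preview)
Your proof is correct and follows essentially the same approach as the paper: in each case you pass to $\si(M/e)$, lift a two-set cover back to $M$, and insert $e$ and the deleted parallel element appropriately. The only cosmetic difference is in verifying that $B_1 \cup \{g\}$ is independent---the paper computes $r_M(B_1 \cup \{g\}) = r_M(B_1 \cup \{g,e\}) = r_{M/e}(B_1) + 1 = |B_1| + 1$ directly, while you reach the same conclusion via the unique-circuit argument.
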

			Let $e$ be an element of $M$ and let $M' = \si(M/e)$. 
			By minimality, $M'$  has a partition into two independent sets $A$ and $B$.
			% Note sets corresponding to $A\cup\{e\}$ and $B\cup\{e\}$ are both independent in $M$.
			Suppose $e$ is not in a triangle. Then $E(M')=E(M) -\{e\}$ and we have $r_M(A\cup \{e\})=r_{M'}(A)+1=|A|+1$ and $r_M(B\cup\{e\})=|B|+1$,  
			so $M$ is covered by the independent sets $A\cup \{e\}$ and $B\cup \{e\}$, which is a contradiction.
			
			Now suppose $e$ is in exactly one triangle $\{e,c,d\}$ of $M$.
			We may assume that $M' = M/e\backslash c$ and that $d \in A$. Then 
			 $r_M(A\cup \{c\})= r_M(A \cup \{c,e\}) = r_{M'}(A)+1=|A|+1$ and $r_M(B\cup \{e\})=r_{M'}(B)+ 1=|B|+1$, 
			so $M$ is covered by the independent sets $A\cup \{c\}$ and $B\cup\{e\}$. This contradiction implies that 
			 \ref{triangle_claim} holds.
	
	% \begin{sublemma}\label{size_claim}
	% 	$|E(M)| = 2r(M) + 1$ and $|A|\leq 2r(A)$ for every proper subset $A$ of $E(M)$.
	% \end{sublemma}
	\begin{sublemma}\label{size_claim}
		$|E(M)| \leq 2r(M) + 1$ and $|A|\leq 2r(A)$ for every proper subset $A$ of $E(M)$.
	\end{sublemma}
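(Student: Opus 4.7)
The plan is to read off both bounds directly from results already in hand. For the first inequality, invoke Lemma~\ref{*} with $k=2$. Since $M$ is a minor-minimal simple matroid that cannot be covered by two independent sets, the lemma yields $2\,r(M) = |E(M)| - 1$, and in particular $|E(M)| \leq 2r(M) + 1$ (indeed with equality).

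For the second inequality, fix a proper subset $A$ of $E(M)$ and choose any element $e \in E(M) \setminus A$. Because $M$ is simple, $M \setminus e$ is also simple, and it is a proper minor of $M$ with strictly fewer elements; by the minor-minimality of $M$ in the class of simple matroids that cannot be covered by two independent sets, the matroid $M \setminus e$ therefore admits a cover by two independent sets. Applying Theorem~\ref{edmonds} to $M \setminus e$ with $k=2$ and to the subset $A \subseteq E(M \setminus e)$ gives $2\,r_{M \setminus e}(A) \geq |A|$. Since $e \notin A$, deletion of $e$ does not change the rank of $A$, so $r_{M \setminus e}(A) = r_M(A)$, and hence $|A| \leq 2r(A)$.

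There is no genuine obstacle here: both halves of the claim are essentially bookkeeping on top of Lemma~\ref{*} and Edmonds's theorem. The only small point worth noting is that one uses the simplicity of $M$ to guarantee that $M \setminus e$ itself (and not merely its simplification) lies in the class to which minor-minimality applies, so that the inequality from Edmonds is immediately available without an intermediate simplification step.
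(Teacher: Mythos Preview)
Your proof is correct and follows essentially the same route as the paper: both use minor-minimality to conclude that proper simple restrictions of $M$ can be covered by two independent sets, and read off the inequalities from there. The only cosmetic differences are that the paper works directly with $M|A$ (so the bound $|A|\le 2r(A)$ comes from the two covering sets themselves rather than via Theorem~\ref{edmonds}) and then derives $|E(M)|\le 2r(M)+1$ from that, whereas you invoke Lemma~\ref{*} and Edmonds's theorem explicitly---exactly the shortcut the paper itself notes just before giving its self-contained version.
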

			Suppose $A$ is a proper subset of $E(M)$. 
			 By the minimality of $M$,
			we can cover $M|A$ by two independent sets,
			and so $|A|\leq 2 r(A)$.
			It  follows easily that
			$|E(M)|\leq 2 r(M)+1$. 
			Thus~\ref{size_claim} holds.

	We construct a simple auxiliary graph $G$ from $M$, the vertices of which are the elements of $M$; two such vertices are adjacent exactly 
	when they share a triangle in $M$.
	Next, we show the following.
	\begin{sublemma}
	\label{ww}
		Let $Z$ be the vertex set of a component of $G$. Then $M|Z$  has a wheel or a whirl as a restriction.
	\end{sublemma}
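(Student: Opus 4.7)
The plan is to find a subset $F \subseteq Z$ satisfying the hypotheses of Lemma~\ref{ww0}, so that $M|F$ is a wheel or whirl and hence a restriction of $M|Z$. By \ref{triangle_claim}, every element of $M$ lies in at least two triangles, so the auxiliary graph $G$ has minimum degree at least two; in particular, the subgraph of $G$ induced on $Z$ contains a cycle. I would pick a \emph{shortest} cycle $b_1 b_2 \cdots b_k b_1$ in $G|Z$ and, for each edge $b_i b_{i+1}$, choose an element $a_i$ so that $\{b_i, a_i, b_{i+1}\}$ is a triangle of $M$. The $b_i$'s are automatically distinct.

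The next step is to verify, using minimality of the cycle, that all $2k$ chosen elements are distinct and that $\{b_1, \ldots, b_k\}$ is independent. First, if $a_i = b_j$ with $j \notin \{i, i+1\}$, then the triangle $\{b_i, b_j, b_{i+1}\}$ would make $b_j$ adjacent in $G$ to both $b_i$ and $b_{i+1}$, producing a chord and a shorter cycle. Second, if $a_i = a_j$ for some $i \neq j$, then circuit elimination applied to $\{b_i, a_i, b_{i+1}\}$ and $\{b_j, a_j, b_{j+1}\}$ at the common element $a_i$ yields a circuit contained in $\{b_i, b_{i+1}, b_j, b_{j+1}\}$; in a simple matroid this produces a triangle among the $b$'s and hence another chord. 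Finally, if $\{b_1, \ldots, b_k\}$ contained a circuit $D$, iterated circuit elimination between $D$ and the triangles $\{b_i, a_i, b_{i+1}\}$ should produce a triangle supported on three of the $b_i$'s, once again creating a chord of the cycle.

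With the hypotheses of Lemma~\ref{ww0} verified, the conclusion is that $M|F$ is a wheel of rank at least three or a whirl of rank at least two, where $F = \{b_1, a_1, \ldots, b_k, a_k\} \subseteq Z$. The main technical obstacle is the independence step: a dependence among the spokes $b_1, \ldots, b_k$ need not supply an obvious chord, and a careful analysis of the circuit-elimination exchanges is needed to guarantee that the reductions really land on a triangle of three $b_i$'s rather than on some larger dependence that can be absorbed. A secondary concern is that Lemma~\ref{ww0} requires $M$ itself to be $3$-connected; in the proof of Theorem~\ref{mainone} this should be either established beforehand or handled by applying the argument inside a $3$-connected piece of $M$ containing $Z$.
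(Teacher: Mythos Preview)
Your overall strategy---find a closed chain of triangles and feed it to Lemma~\ref{ww0}---matches the paper's, but the way you try to produce that chain has real gaps, and the paper's construction is different in an essential way.

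The independence step is the main problem, and it is not just a technicality. A shortest cycle in $G|Z$ gives you nothing about matroid independence of the $b_i$. Concretely: if $M|Z$ contains a $4$-point line $U_{2,4}$, then any two of its points share a triangle, so $G|Z$ contains a $K_4$; every shortest cycle is a triangle of collinear, hence dependent, points. Your circuit-elimination idea (``a dependence among the $b_i$ must eventually produce a triangle of $b$'s and hence a chord'') does not work: a $4$-circuit among the $b_i$ creates no edges in $G$ and therefore no chord. The same issue bites your distinctness argument for the $a_i$: eliminating the common element from $\{b_i,a_i,b_{i+1}\}$ and $\{b_j,a_i,b_{j+1}\}$ may well leave a $4$-circuit $\{b_i,b_{i+1},b_j,b_{j+1}\}$, which again gives no chord.

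The paper sidesteps all of this. It first disposes of the $4$-point-line case (that already gives a rank-$2$ whirl), and then builds a \emph{maximal} sequence $b_1,a_1,b_2,\ldots,b_n$ of distinct elements with $\{b_1,\ldots,b_n\}$ independent by construction. Closure of the chain is then forced not by graph-theoretic minimality but by the density bound~\ref{size_claim}: the extra triangles at both ends cannot both introduce new elements, since a proper subset $A$ with $|A|=2r(A)+1$ is impossible. That use of~\ref{size_claim} is the engine of the argument, and your proposal does not invoke it at all.

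Your worry about $3$-connectedness in Lemma~\ref{ww0} is legitimate as stated, but harmless: the proof of that lemma only uses that $M$ is simple, which holds here, so both the paper and any corrected version of your argument may apply it.
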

	
	We may assume that $M|Z$ has no line with four or more points otherwise $M$ has a rank-$2$ whirl as a restriction. For $b_1$ in $Z$, 
	by \ref{triangle_claim}, we can construct a maximal sequence $b_1,a_1,b_2,a_2,\dots,b_n$ of distinct elements such that $\{b_1,b_2,\dots,b_n\}$ is independent and 
	$\{b_i,a_i,b_{i+1}\}$ is a triangle for all $i$ in $\{1,2,\dots,n-1\}$. Then $n \ge 3$. 
	
	Now $M$ has a triangles $\{b_n,a_n,b_{n+1}\}$ and $\{b_0,a_0,b_1\}$ that differ  from 
	$\{b_{n-1},a_{n-1},b_{n}\}$ and $\{b_1,a_1,b_2\}$, respectively.  Let $A' = \{b_1,a_1,b_2,a_2,\dots,b_{n-1},a_{n-1},b_n\}$. 
	% Suppose that $\{a_n, b_{n+1}\}$ avoids $\{b_1,a_1,b_2,a_2,\dots,b_{n-1},a_{n-1}\}$. Now $\{b_1,b_2,\dots,b_n\} \cup \{d\}$  is  dependent for each $d$ in $\{a_n, b_{n+1}\}$. After possibly interchanging the labels on $a_n$ and $b_{n+1}$, we may assume that $\{a_{n-1},b_{n+1}\}$ is contained in a triangle with some $c_i$ in $\{a_i,b_i\}$ with $1 \le i \le n-2$ otherwise we obtain the contradiction that $d(\si((M|Z)/a_{n-1})) > 2$. Then 
	% 	$\{c_i,b_{i+1},b_{i+2},\dots,b_{n-1},a_{n-1}\}$ is an independent set in $M$ such that every two consecutive elements in the given cyclic order   are in a triangle. Thus $M|Z$ has a wheel or whirl of rank $n-1-i +1$ as a restriction. 
	 Assume that both $\{a_n, b_{n+1}\}$ and $\{a_0,b_0\}$ avoid $A'$. Then 
	 $|A' \cup \{a_n, b_{n+1}\}| = 2n+1 = 2r(A' \cup \{a_n, b_{n+1}\}) + 1$. Thus, 
	 by~\ref{size_claim}, $A' \cup \{a_n, b_{n+1}\} = E(M)$. By symmetry, $A' \cup \{a_0, b_{0}\} = E(M)$. Hence 
	  $\{a_n,b_{n+1}\}=\{b_0,a_0\}$, so $\{b_n,a_n,b_{n+1},b_1\}$ is a $4$-point line, a contradiction.
		
		We may now assume that   $b_{n+1}$ is a member $c_i$ of $\{b_i,a_i\}$ for some $i$ with $1 \le i \le n-1$. Then $\{c_i,b_{i+1},b_{i+2},\dots,b_{n}\}$ is an independent set in $M|Z$ such that every two consecutive elements in the given cyclic order  are in a triangle. Thus, by Lemma~\ref{ww0}, $M|Z$ has a wheel or whirl of rank $n-i+1$ as a restriction.  Hence \ref{ww} holds.

	 \begin{sublemma}
	 \label{nottoo}
	 For some component of $G$ having vertex set $Z$, the matroid $M|Z$ is not a wheel or a whirl.
	 \end{sublemma}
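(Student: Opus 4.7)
The plan is a short parity argument by contradiction. Suppose, for contradiction, that $M|Z$ is a wheel or a whirl for every component $Z$ of the auxiliary graph $G$. The first thing I would record is that a wheel of rank $n \ge 3$ has $2n$ elements and rank $n$, and a whirl of rank $n \ge 2$ likewise has $2n$ elements and rank $n$. Consequently, every such component satisfies $|Z| = 2\,r(Z)$.

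Next, I would use that the components of $G$ partition $E(M)$ (every element has neighbors in $G$ by \ref{triangle_claim}, so no component can be a singleton) and sum over the components $Z_1, \ldots, Z_p$:
$$|E(M)| \;=\; \sum_{i=1}^{p} |Z_i| \;=\; 2\sum_{i=1}^{p} r(Z_i),$$
which is even. On the other hand, Lemma~\ref{*} applied with $k=2$ yields $|E(M)| = 2r(M) + 1$, which is odd. This parity clash is the contradiction that proves~\ref{nottoo}.

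There is no real obstacle in this step. The substance has already been carried by \ref{ww} (guaranteeing the wheel or whirl restriction inside each component) and by Lemma~\ref{*} (pinning down $|E(M)| = 2r(M) + 1$); \ref{nottoo} then follows immediately from the incompatibility of these two parities. The only bookkeeping item is the remark that wheels and whirls of the relevant ranks have twice as many elements as their rank, which is immediate from their definitions.
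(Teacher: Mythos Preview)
Your parity argument is correct and is a genuinely different---and cleaner---route than the paper's. The paper first invokes \ref{triangle_claim} to argue that the only possible components are rank-$2$ whirls and rank-$3$ wheels (since higher-rank wheels and whirls have rim elements lying in a unique triangle), then counts $|E(M)| = 4s + 6t$ with $r(M) \le 2s+3t$, uses \ref{size_claim} to force equality, deduces that $M$ is a direct sum of these wheels and whirls, and finally reaches a contradiction because each summand (and hence $M$) can be covered by two independent sets. Your argument bypasses all of this: every wheel and every whirl has an even number of elements, so $|E(M)|$ is even, while Lemma~\ref{*} gives $|E(M)| = 2r(M)+1$, which is odd. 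This needs neither the rank restriction on the components nor the direct-sum conclusion.

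One small remark on your closing commentary: \ref{ww} is not actually used in proving \ref{nottoo}. Sublemma~\ref{ww} shows each component \emph{contains} a wheel or whirl restriction, whereas the hypothesis-for-contradiction in \ref{nottoo} is that each component \emph{is} a wheel or whirl; the latter is what drives your parity count, and it is assumed, not derived from \ref{ww}. This does not affect the validity of your proof.
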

	 
	 % If $G$ has $s$ and $t$ components for which the corresponding restrictions of $M$ are $U_{2,4}$ and $M(K_4)$, respectively, then 
	 % $|E(M)| = 4s + 6t$. Since this is even, we contradict \ref{size_claim}. Thus  \ref{nottoo} holds. 
	 Assume that this fails. Then, by \ref{triangle_claim}, the only components of $G$ are rank-$2$ whirls or rank-$3$ wheels. Assume there are $s$ of the former and $t$ of the latter. 
	 Then
	    $|E(M)| = 4s+ 6t = 2(2s + 3t)$. Clearly $r(M) \le 2s + 3t$. 
	 By~\ref{size_claim}, equality must hold  here. Hence each component of $G$ corresponds to a wheel or whirl component of $M$. As each wheel and each whirl can be covered by     two independent sets, so too can $M$,  
	  a contradiction. Thus~\ref{nottoo} holds.

		Now take a component of $G$ having vertex set $Z$ such that $M|Z$ is not a wheel or a whirl.
		By~\ref{ww}, consider a wheel or whirl restriction of $M|Z$  with basis $B=\{b_1,b_2,\dots,b_n\}$ and ground set $W=\{b_1,a_1,b_2,a_2,\dots,b_n,a_n\}$. Let 
		  $\{b_i,a_i,b_{i+1}\}$ be a triangle for all $i$ where $b_{n+1} = b_1$. As $W \neq Z$, 
		 there is a point $\beta_1$ in $W$ that is contained in a triangle $\{\beta_1,\alpha_1,\beta_2\}$ that is not a triangle of $M|W$. 
		If $M|W$ is a rank-$2$ whirl or a rank-$3$ wheel, then, by symmetry, we may assume that $\beta_1=a_1$. 
		If, instead,  $M|W$ is neither a rank-$2$ whirl nor a rank-$3$ wheel,
		then \ref{triangle_claim} guarantees that  such a triangle $\{\beta_1,\alpha_1,\beta_2\}$   exists with $\beta_1=a_1$.
		By repeatedly using  \ref{triangle_claim},
		we can construct a  sequence     $\beta_1,\alpha_1,\dots, \beta_{m+1}$ where $\{\beta_i,\alpha_i,\beta_{i+1}\}$ is a triangle for all $i$ in $\{1,2, \dots, m\}$ and  
		 $B\cup \{\beta_2,\dots,\beta_{m+1}\}$ is  dependent but 
		 $B\cup\{\beta_2,\dots,\beta_m\}$ is independent. By potentially interchanging $\alpha_m$ and $\beta_{m+1}$,   we may assume that $\alpha_m\notin W$. Let $Q = \{\beta_1,\alpha_1,\dots, \beta_{m+1}\}$. Then 
		 \begin{equation}
		 \label{star}
		 r(W \cup Q) = r(W \cup (Q - \{\beta_{m+1}\})) = n+m -1.
		 \end{equation}
		 As $|W \cup (Q - \{\beta_{m+1}\})| = 2(n+m -1) + 1 = 2r(W \cup (Q - \{\beta_{m+1}\})) + 1$, we deduce, by \ref{size_claim}, that 
		  \begin{equation}
		 \label{*em}
		 W \cup (Q - \{\beta_{m+1}\}) = E(M). 
		 \end{equation}
		 Hence 
		\begin{equation}
		\label{beta}
		\beta_{m+1} \in W \cup (Q - \{\beta_{m+1}\}).
		\end{equation}

	 Assume that the theorem fails. We now show that  
	% \begin{sublemma}\label{wh_claim}
	% 	$M$ has no wheel- or whirl-restriction of rank exceeding four. Moreover, if $M$ has 
	% 	  a $3$-whirl, a $4$-wheel, or a $4$-whirl as a restriction, then it  must be spanning.
	% \end{sublemma}
	\begin{sublemma}\label{wh_claim}
		$M|Z$ has no wheel-restriction of rank exceeding three  and no whirl-restriction of rank exceeding two. 
			\end{sublemma}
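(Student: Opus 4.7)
The plan is to argue by contradiction: suppose $M|W$ is a wheel of rank $n \ge 4$ or a whirl of rank $n \ge 3$.

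First I would extract the combinatorial data. By Lemma~\ref{*} and equation (\ref{star}), $|E(M)| = 2r(M)+1 = 2(n+m-1)+1$, while $|W| + |Q - \{\beta_{m+1}\}| = 2n + 2m$. Combined with (\ref{*em}), inclusion--exclusion gives $W \cap (Q - \{\beta_{m+1}\}) = \{a_1\}$, so the elements $\alpha_1, \ldots, \alpha_m, \beta_2, \ldots, \beta_m$ all lie outside $W$. By (\ref{beta}), $\beta_{m+1}$ coincides with some element of $W \cup (Q - \{\beta_{m+1}\})$. Using the dependency of $B \cup \{\beta_2, \ldots, \beta_{m+1}\}$ together with the independence of $B \cup \{\beta_2, \ldots, \beta_m\}$, I would rule out $\beta_{m+1} \in B$, so if $\beta_{m+1} \in W$ then $\beta_{m+1}$ is a rim element $a_j$.

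This splits the analysis into two cases. In Case~A, $\beta_{m+1} \in Q - \{\beta_{m+1}\}$; independence of $\{\beta_1, \ldots, \beta_m\}$ forces $\beta_{m+1}$ to equal $\beta_1 = a_1$ or some $\alpha_j$. After possibly shortening to a minimal cyclic subsequence and applying Lemma~\ref{ww0}, I would extract a second wheel or whirl restriction $M|W'$ with $W \cap W' = \{a_1\}$ and $W \cup W' = E(M)$, making $M$ the parallel connection at $a_1$ of two wheels or whirls. For $n \ge 4$, such an $M$ contains $P(M(K_4), N')$ as a proper minor with $N'$ a wheel or whirl; this proper minor already appears among $P(M(K_4), M(K_4))$, $P(U_{2,4}, M(K_4))$, and $P(U_{2,4}, U_{2,4})$, each of which is in the listed ten, contradicting the minor-minimality of $M$.

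In Case~B, $\beta_{m+1} = a_j$ for some $j \neq 1$. The triangle $\{\beta_m, \alpha_m, a_j\}$ with $\alpha_m, \beta_m \notin W$ provides a bridge from outside into the wheel or whirl. I would construct a 2-covering $(I_1, I_2)$ of $M$ by first choosing a 2-covering of $M|W$ that places $a_1$ and $a_j$ into opposite parts, then distributing $\alpha_1, \beta_2, \alpha_2, \ldots, \alpha_m$ along the chain triangles so that each $\alpha_i$ is assigned to a part that does not already contain both $\beta_i$ and $\beta_{i+1}$. The required 2-covering of $M|W$ exists precisely because $n \ge 4$ (wheel) or $n \ge 3$ (whirl) provides enough flexibility; for smaller $n$ the wheel or whirl coverings are too rigid to separate two prescribed rim elements in this way.

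The main obstacle I foresee is Case~B: verifying that the extended covering really is a pair of independent sets in $M$, which requires tracking closure relations through the entire chain $Q$ and handling subcases for the placement of $a_j$ within the wheel or whirl. This is also where the exact thresholds $n \ge 4$ and $n \ge 3$ become essential and where the dependency forced by (\ref{star}) on $B \cup \{\beta_2, \ldots, \beta_{m+1}\}$ is used decisively.
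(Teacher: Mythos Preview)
Your Case~B plan cannot work, for a counting reason: by Lemma~\ref{*}, $|E(M)| = 2r(M)+1$, so two independent sets together cover at most $2r(M) < |E(M)|$ elements. No $2$-covering of $M$ exists. The flexibility you cite for $n\ge 4$ (wheel) or $n\ge 3$ (whirl) is genuine for $M|W$ alone, but the chain $Q-\{\beta_{m+1}\}$ adds $2m-1$ elements while increasing the rank by only $m-1$, so the one-element surplus $|E(M)|-2r(M)=1$ persists globally. The obstacle you foresee in verifying independence is not a technicality to be worked through; it is insurmountable, and so Case~B as outlined yields no contradiction.

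The paper proves \ref{wh_claim} by an entirely different route, not via the landing position of $\beta_{m+1}$ (that case split is used only \emph{after} \ref{wh_claim} is established). From submodularity it obtains $r(\cl(W)\cap\cl(Q))\le 2$, and since a wheel of rank at least four or a whirl of rank at least three has at least three rim elements each lying in a unique triangle of $M|W$, one of those rim elements would violate \ref{triangle_claim} unless $W$ spans $M$. That forces $m=1$ and $E(M)=W\cup\{\alpha_1\}$; a direct case analysis on the triangles through $\alpha_1$ then identifies $M$ as one of $M^*(K_{3,3})$, $O_7$, $P_7$, $F_7^-$, or $M(K_5\backslash e)$, each already on the list of ten and hence contradicting the assumption that the theorem fails.
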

	
		Assume that this fails. Then we may assume that $M|W$ is a wheel of rank at least four or a whirl of rank at least three. 
		Now $r(W) = n$ and $r(Q) \le m+1$. By (\ref{star}) and submodularity, $r(\cl(W) \cap \cl(Q))\le 2$. Assume $W$ does not span $M$. Then, by (\ref{star}) and (\ref{*em}), we see that $m > 1$ and the only possible elements of $W$ that can lie in triangles with elements of $Q - W$ are $\beta_1$ and $\beta_{m+1}$. But a wheel of rank at least four and a whirl of rank at least three have at least three elements that are in unique triangles. Hence one of these elements will violate \ref{triangle_claim}. 
		
		We now know that $W$ spans $M$, so the unique element of $Q - W$ is $\alpha_1$. Each of $a_1,a_2,\dots,a_n$ must be in a triangle with $\alpha_1$, the other element of which is in $W$. Assume both $\{a_1,\alpha_1,a_3\}$ and $\{a_1,\alpha_1,a_{n-1}\}$ are triangles. Then $n = 4$. Suppose $\{a_2,\alpha_1,a_4\}$ is also a triangle. Then, by Lemma~\ref{ww0}, for each $i$ in $\{2,4\}$, deleting   $a_i$  from $M|(W \cup Q)$ gives a wheel or whirl of rank four. As $\{b_1,b_4,\alpha_1,a_2\}$ and $\{b_2,b_3,\alpha_1,a_4\}$ are circuits, both of these deletions are wheels. It follows that 
$M|(W \cup Q) \cong M^*(K_{3,3})$, so $M \cong M^*(K_{3,3})$, a contradiction. Thus, we may assume that $\{a_2,\alpha_1,a_4\}$ is not a triangle. Since $\alpha_1 \not\in \cl(\{b_1,b_2,b_3\}) \cup \cl(\{b_2,b_3,b_4\})$, there is no triangle containing $\{a_2,\alpha_1\}$, a contradiction. 
		
		We  may now assume that $\{a_1,\alpha_1,a_3\}$ is not a triangle. Then, by~\ref{triangle_claim},
		$W$ has distinct elements $x$  and $y$   such that $\{a_1,\alpha_1,x\}$ and $\{a_3,\alpha_1,y\}$ are triangles.
		Thus $\{a_1,a_3,x,y\}$ contains a circuit. Now $\{a_1,a_3\}$ is not in a triangle of $M|W$. Moreover, if  $\{a_1,x,y\}$ is a triangle, then $\{x,y\} = \{b_1,b_2\}$. Using the triangles,  $\{a_1,\alpha_1,x\}$ and $\{a_3,\alpha_1,y\}$, we deduce that  $a_3 \in \cl(\{b_1,b_2\})$, a contradiction. It follows that $\{a_1,a_3,x,y\}$ is a circuit of $M$. Thus   $M|W$ is either a rank-$3$ whirl or a rank-$4$ wheel.  
		
		Suppose $M|W$ is a rank-$3$ whirl. Then $M$ is an extension of this matroid by $\alpha_1$ in which every element is in at least two triangles. If $\{a_1,a_2,\alpha_1\}$ or $\{a_2,a_3,\alpha_1\}$ is a triangle, then one easily checks that $M \cong O_7$ or $M \cong P_7$, a contradiction. Hence we may assume that none of  $\{a_1,a_2,\alpha_1\}$, $\{a_2,a_3,\alpha_1\}$, or  $\{a_3,a_1,\alpha_1\}$ is a triangle. Then, to avoid having $U_{2,5}$ as a minor of $M$, we must have $\{a_1,b_3,\alpha_1\}$, $\{a_2,b_1,\alpha_1\}$, and $\{a_3,b_2,\alpha_1\}$ as triangles, that is, $M \cong F^- _7$, a contradiction. 
		
		We are left with the possibility that $M|W$ is a rank-$4$ wheel. Since it has $\{a_1,a_3,x,y\}$ as a circuit, it follows that $\{x,y\} = \{a_2,a_4\}$. Then   $M$ has either $\{a_1,a_2,\alpha_1\}$ and $\{a_3,a_4,\alpha_1\}$ as triangles or   $\{a_1,a_4,\alpha_1\}$ and $\{a_2,a_3,\alpha_1\}$ as triangles. By symmetry, we may assume that we are in the second case. Then, by submodularity using the sets $\{b_1,b_2,a_1,a_4,b_4,\alpha_1\}$ and 
		$\{b_2,b_3,a_2,a_3,b_4,\alpha_1\}$, we deduce that $r(\{b_1,b_4,\alpha_1\}) = 2$. It follows that  
		 $M \cong M(K_5\backslash e)$, a contradiction. We conclude that \ref{wh_claim} holds.
		
		Now suppose that $W$ spans $Z$. If $M|W$ is a rank-$2$ whirl, then $M|Z \cong U_{2,5}$, a contradiction. If $M|W$ is a rank-$3$ wheel, then one easily checks that $M|Z$ is isomorphic to one of $O_7$, $F_7^-$, or $F_7$, a contradiction.
		
		We may now assume that $W$ does not span $Z$. Then $m > 1$. By (\ref{beta}),  $\beta_{m+1} \in W \cup (Q - \{\beta_{m+1}\})$. We will first suppose that $\beta_{m+1} = \beta_i$ for some $i$ in $\{1,2,\dots,m\}$. Then $\{\beta_i,\beta_{i+1},\dots,\beta_m\}$ is an independent set and 
		$\{\beta_j,\alpha_j,\beta_{j+1}\}$ is a triangle for all $j$ in $\{i,i+1,\dots,m\}$. By \ref{wh_claim} and Lemma~\ref{ww0}, for 
		$R = \{\beta_i,\alpha_i,\beta_{i+1},\alpha_{i+1},\dots,\beta_m,\alpha_m\}$, the matroid $M|R$ is a rank-$3$ wheel or a rank-$2$ whirl. 
		Then the matroid obtained from $M|Z$ by contracting   $\{\alpha_2,\alpha_3,\dots, \alpha_{i-1}\}$ and simplifying is the parallel connection of $M|W$ and $M|R$, that is, $M|Z$ has as a minor one of $P(U_{2,4},U_{2,4})$, $P(U_{2,4},M(K_4))$, and  $P(M(K_4),M(K_4))$, a contradiction.
		
		Finally, suppose that $\beta_{m+1} \not \in \{\beta_1,\beta_2,\dots,\beta_m\}$. Then $\beta_{m+1}$ is $\alpha_i$ for some $i \ge 1$, or 
		$\beta_{m+1} \in W$. %In the latter case, by symmetry, we may assume that $\beta_{m+1} = b_1.$ 
		Consider the first case and take $\alpha_{m+1} = \beta_i$. Then, by \ref{wh_claim} and Lemma~\ref{ww0}, with $R = \{\beta_{i+1},\alpha_{i+1},\dots,\beta_{m+1},\alpha_{m+1}\}$, we have that $M|R$ is a rank-3 wheel or a rank-2 whirl. Contracting $\{\alpha_2,\alpha_3,\dots, \alpha_{i-1}\}$ from $M|Z$ and simplifying, we obtain one of $P(U_{2,4},U_{2,4})$, $P(U_{2,4},M(K_4))$, and  $P(M(K_4),M(K_4))$, a contradiction. 
 In the second case, when $\beta_{m+1} \in W$, we recall that $\beta_1 = a_1$. Suppose that $\{\beta_1,\beta_{m+1}\}$ is not in a triangle of $M|W$. Then $M|W \cong M(K_4)$ and $\beta_{m+1} = b_3$. By assumption, $\{b_1,b_2,b_3\} \cup \{\beta_2,\dots,\beta_{m}\}$ is independent. By Lemma~\ref{ww0}, the triangles $\{b_1,b_2,a_1\}$, $\{a_1,\alpha_1, \beta_2\},\dots, \{\beta_m,\alpha_m, b_3\}$, $\{b_3,a_3,b_1\}$ imply that $M|Z$ has a wheel or whirl of rank at least four as a restriction, a contradiction. We deduce  that $\{\beta_1,\beta_{m+1}\}$ is   in a triangle of $M|W$. 
 Then, by symmetry, we may assume that $\beta_{m+1} = b_1$. 
 We let $\alpha_{m+1} = b_2$. Then, for $R = \{\beta_{1},\alpha_{1},\dots,\beta_{m+1},\alpha_{m+1}\}$, we have that $M|R$ is a rank-3 wheel or a rank-2 whirl. But $\alpha_1 \not\in \cl(W)$, so $M|R$ is a rank-3 wheel. If $M|W$ is a rank-$2$ whirl, then  $O_7$ is a restriction of $M|Z$, a contradiction. If $M|W$ is a rank-$3$ wheel, then $M|(W \cup R)$ has rank four and consists of two copies of $M(K_4)$ sharing a  triangle. This matroid is $M(K_5 \backslash e)$, a contradiction.
\end{proof}

%{\bf 
%What can we say about the minor-closed class with these matroids as its excluded minors? How does the union of the simple matroids in this class and the loopless $U_{1,3}$-free matroids compare to $\cM_2$?}

\section{The  density-critical matroids of small density}

%Evidently the unique density-critical matroid of rank one is $U_{1,1}$.

In this section, we prove Theorem~\ref{9/4}. The following result \cite{JGO81} (see also \cite[Lemma 4.3.10]{JGO11}) will be used repeatedly in this proof. 

\begin{lemma}
\label{minc}
Let $M$ be a connected matroid having at least two elements and let $\{e_1,e_2,\dots,e_m\}$ be a cocircuit of $M$ such that $M/e_i$ is disconnected for all $i$ in $\{1,2,\dots,m-1\}$. Then $\{e_1,e_2,\dots,e_{m-1}\}$ contains a $2$-circuit of $M$.
\end{lemma}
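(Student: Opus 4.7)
My plan is to argue by contradiction, taking $(M, C^* = \{e_1, \dots, e_m\})$ to be a counterexample with $|E(M)|$ as small as possible, so in particular $\{e_1, \dots, e_{m-1}\}$ contains no $2$-circuit of $M$. For each $i \le m-1$, the hypothesis that $M/e_i$ is disconnected supplies a partition $E(M) - e_i = X_i \sqcup Y_i$ into nonempty pieces with $r_M(X_i \cup \{e_i\}) + r_M(Y_i \cup \{e_i\}) = r(M) + 1$. On the dual side, $C^*$ being a cocircuit means its complement $H = E(M) \setminus C^*$ is a hyperplane; equivalently, $M/H$ has rank one and the elements of $C^*$ form a single parallel class in $M/H$, so for every pair $i \ne j$ there is a circuit $C_{ij}$ of $M$ with $\{e_i, e_j\} \subseteq C_{ij}$ and $C_{ij} \setminus \{e_i, e_j\} \subseteq H$.

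The heart of the plan is to play these two pieces of structure off each other. Each transversal circuit $C_{ij}$ is, after contracting $e_i$, a dependent set of $M/e_i$; by the separation $(X_i, Y_i)$, it must lie inside $X_i \cup \{e_i\}$ or inside $Y_i \cup \{e_i\}$. This tightly couples the positions of the $e_j$ (for $j \ne i$) relative to $(X_i, Y_i)$ to the positions of elements of $H$. I will then combine this, ranging over all $i \le m-1$, with orthogonality between $C^*$ and any circuit of $M$ to deduce a rigid bookkeeping on how $C^*$ and $H$ interact. The goal is to identify a proper minor---naturally a deletion $M \setminus e_m$ (so that $\{e_1, \dots, e_{m-1}\}$ remains, at worst, a smaller qualifying cocircuit) or a contraction $M/h$ for a suitably chosen $h \in H$---which is still connected, in which $C^*$ still descends to a cocircuit, and for which the disconnection hypotheses survive on $e_1, \dots, e_{m-1}$. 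The minimality of the counterexample will then force a $2$-circuit of the minor inside $\{e_1, \dots, e_{m-1}\}$, and such a $2$-circuit is also a $2$-circuit of $M$, yielding the desired contradiction.

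The main obstacle I expect is that $C^* \setminus \{e_i\}$ is \emph{not} itself a cocircuit of $M/e_i$, so the convenient fact that a cocircuit cannot be split by a separation does not directly pin down where the elements $e_j$ sit relative to $(X_i, Y_i)$. I will instead have to track the transversal circuits $C_{ij}$ and use orthogonality to conclude that any circuit of $M$ which crosses one of the separations must meet $C^*$ in at least two elements. Arranging this carefully across all $i \le m-1$, and in particular using the asymmetric role of $e_m$---the sole element of $C^*$ not required to give a disconnected contraction---to select the right reduction for the induction, is where the real work lies.
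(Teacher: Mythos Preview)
The paper does not prove this lemma; it quotes it from \cite{JGO81} (with the textbook pointer \cite[Lemma~4.3.10]{JGO11}) and uses it as a black box. There is therefore no in-paper argument to compare your attempt against.

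On its own merits, your proposal assembles the right ingredients---the separations $(X_i,Y_i)$ with $e_i\in\cl(X_i)\cap\cl(Y_i)$, the transversal circuits $C_{ij}\subseteq H\cup\{e_i,e_j\}$, and orthogonality with $C^*$---but it is a plan rather than a proof. You yourself defer the decisive step (``where the real work lies''), and that step is genuinely missing. Each of your two candidate reductions has an unresolved obstruction. If you delete $e_m$, then $M\backslash e_m$ may fail to be connected, and you give no argument for that case. If instead you contract some $h\in H$, then even granting that $M/h$ is connected and that $C^*$ remains a cocircuit there, a $2$-circuit of $M/h$ inside $\{e_1,\dots,e_{m-1}\}$ need not be a $2$-circuit of $M$: it may arise from a triangle $\{e_i,e_j,h\}$ of $M$, so the inductive conclusion would hand you back nothing usable. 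You have not shown how to choose $h$ (or otherwise arrange the reduction) so as to avoid both problems simultaneously. Until this is done---for instance by analysing the $2$-sum structure forced when $M\backslash e_m$ is disconnected and recursing into an appropriate piece, or by showing that the $2$-separations $(X_i\cup e_i,Y_i)$ must nest in a way that produces a smaller connected instance---the induction does not close.
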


We shall make repeated use of the following consequence of this lemma.

\begin{corollary}
\label{dir2}
Let $M$ be a simple connected matroid and $Z$ be a non-empty non-spanning subset of $E(M)$. Then $M$ has a simple connected minor N such that $N|Z = M|Z$ and $r(N) = r_M(Z)$.
\end{corollary}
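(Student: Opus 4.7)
My plan is to induct on $r(M)-r_M(Z)$. The base case $r(M)=r_M(Z)$ is immediate, taking $N=M$. For the inductive step, the goal is to produce a simple, connected minor $M'$ of $M$ with $r(M')=r(M)-1$, $r_{M'}(Z)=r_M(Z)$, and $M'|Z=M|Z$; induction applied to $M'$ with the same $Z$ then finishes the proof.

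The approach for the inductive step is to contract a carefully chosen element $e\notin\cl(Z)$ and then simplify. The main obstacle is ensuring such an $e$ can be chosen so that $M/e$ is connected, and this is exactly what Lemma~\ref{minc} is set up to supply. Since $\cl(Z)$ is a non-spanning flat of $M$, its complement $E(M)\setminus\cl(Z)$ is not coindependent, and so contains a cocircuit $C^*=\{e_1,\ldots,e_m\}$. Listing $C^*$ in any order and invoking Lemma~\ref{minc}, the fact that the simple matroid $M$ has no $2$-circuit rules out the hypothesis of that lemma, forcing some $e=e_i$ with $i\le m-1$ to have $M/e$ connected. By construction this $e$ lies outside $\cl(Z)$.

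It remains to simplify $M/e$ without disturbing $M|Z$ and without losing connectedness. Since $e\notin\cl(Z)$ we have $(M/e)|Z=M|Z$ and $r_{M/e}(Z)=r_M(Z)$. The matroid $M/e$ is loopless (a loop would be parallel to $e$ in the simple $M$), and any parallel pair $\{f,g\}$ in $M/e$ arises from a triangle $\{e,f,g\}$ of $M$; were both $f$ and $g$ in $Z$ we would have $e\in\cl(Z)$, a contradiction, so at most one of $f,g$ lies in $Z$. I therefore define $M'=\si(M/e)$ by deleting, from each non-trivial parallel class, an element outside $Z$; this gives $M'|Z=M|Z$ and $r(M')=r(M)-1$. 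Finally, deleting one of a pair of parallel elements of a connected matroid preserves connectedness (in any purported decomposition $N_1\oplus N_2$ of $M/e\setminus f$, the surviving partner $g$ of $f$ lies in some part, say $N_1$, so $f$ is in the closure of $E(N_1)$ in $M/e$, yielding a decomposition of $M/e$ itself), so iterating across the simplification gives that $M'$ is connected. The inductive hypothesis then applies to $M'$ and completes the proof.
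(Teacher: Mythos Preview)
Your proof is correct and follows essentially the same route as the paper: pick a cocircuit disjoint from $\cl(Z)$, use Lemma~\ref{minc} together with the simplicity of $M$ to find an element $e$ of that cocircuit with $M/e$ connected, simplify while keeping $Z$, and iterate. The paper's proof is merely terser—it omits the explicit induction framework and the verification that simplification can be done preserving $Z$ and connectedness—so your additional detail is welcome but not a different argument.
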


\begin{proof}
Let $C^*$ be a cocircuit of $M$ that is disjoint from $\cl(Z)$. As $M$ is simple, it follows by Lemma~\ref{minc} that there is an element $e$ of $C^*$ such that $M/e$ is connected. Since $e \not \in \cl(Z)$, we see that $(M/e)|Z = M|Z$. Clearly we can label $\si(M/e)$ so that its ground set contains $Z$. If $r(M) - r(Z) = 1$, then we take $N = \si(M/e)$. Otherwise we repeat the above process using $\si(M/e)$ in place of $M$. After $r(M) - r(Z)$ applications of this process, we obtain the desired minor $N$.
\end{proof}

The next result, which was proved by Dirac~\cite{GAD60}, follows easily by induction after  recalling that a connected matroid with no minor isomorphic to $U_{2,4}$ or $M(K_4)$ is isomorphic to the cycle matroid of a series-parallel network.

\begin{lemma}
\label{dir}
Let $M$ be a simple matroid having no minor isomorphic to $U_{2,4}$ or $M(K_4)$. Then 
$$|E(M)| \le 2r(M) - 1.$$
\end{lemma}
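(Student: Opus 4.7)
The plan is to argue by induction on $r(M)$, as suggested by the remark preceding the statement. The base case $r(M) = 1$ is immediate from simplicity, which forces $|E(M)| \le 1 = 2r(M) - 1$. (Implicitly one assumes $r(M) \ge 1$, since for $r(M) = 0$ the inequality is false.)

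For the inductive step, I would first dispose of the disconnected case. If $M = N_1 \oplus N_2$ with both summands non-empty, then, since $M$ is simple, each $N_i$ is simple of rank at least one, inherits the hypothesis of having no $U_{2,4}$- or $M(K_4)$-minor, and has rank strictly less than $r(M)$. Applying the inductive hypothesis to each $N_i$ and summing yields $|E(M)| \le 2r(M) - 2$, which is stronger than required.

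In the connected case, I would invoke the characterization recalled just before the statement: $M \cong M(G)$ for some series-parallel network $G$. Since $M$ is simple, $G$ has no loops and no parallel edges, so $G$ is a simple $K_4$-minor-free graph on $r(M) + 1$ vertices. The classical edge bound $|E(G)| \le 2|V(G)| - 3$ for simple $K_4$-minor-free graphs then yields
$$|E(M)| = |E(G)| \le 2(r(M) + 1) - 3 = 2r(M) - 1,$$
as desired.

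The main (and only real) obstacle is verifying the graph-theoretic edge bound, but this is itself a short induction: every simple $K_4$-minor-free graph on at least three vertices has a vertex of degree at most two, and deleting such a vertex decreases $|E(G)|$ by at most two while decreasing $2|V(G)| - 3$ by exactly two. Apart from this, the argument is routine bookkeeping.
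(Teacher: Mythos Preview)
Your proposal is correct and follows essentially the same approach the paper sketches: reduce to the connected case, invoke the series-parallel characterization, and finish by induction. The paper gives only the one-line hint ``follows easily by induction after recalling that a connected matroid with no minor isomorphic to $U_{2,4}$ or $M(K_4)$ is isomorphic to the cycle matroid of a series-parallel network,'' and your write-up simply makes this explicit by passing to the graph and quoting the bound $|E(G)|\le 2|V(G)|-3$ for simple $K_4$-minor-free graphs, which is exactly Dirac's result cited as~\cite{GAD60}.
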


We omit the elementary proof of the next result a consequence of which is that every density-critical matroid   is connected. 

\begin{lemma}
\label{crit}
Let $M_1$ and $M_2$ be matroids of  rank at least one. Then 
$$d(M_1 \oplus M_2) \le \max\{d(M_1), d(M_2)\}.$$
Moreover, equality holds here if and only if $d(M_1) = d(M_2)$.
\end{lemma}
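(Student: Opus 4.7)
The plan is to compute both quantities directly and recognize the left-hand side as a weighted average of the densities on the right. First I would note that $r(M_1 \oplus M_2) = r(M_1) + r(M_2)$, and that the rank-one flats of $M_1 \oplus M_2$ are precisely the rank-one flats of $M_1$ together with those of $M_2$: every non-loop element lies in the ground set of a unique summand, and no parallel class can cross the summands since any two elements from different summands are independent. Hence $\varepsilon(M_1 \oplus M_2) = \varepsilon(M_1) + \varepsilon(M_2)$, and therefore
$$d(M_1 \oplus M_2) \;=\; \frac{\varepsilon(M_1) + \varepsilon(M_2)}{r(M_1) + r(M_2)} \;=\; \frac{r(M_1)\,d(M_1) + r(M_2)\,d(M_2)}{r(M_1) + r(M_2)}.$$

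Since $r(M_1), r(M_2) \ge 1$, the right-hand side is a convex combination of $d(M_1)$ and $d(M_2)$ with strictly positive weights. Without loss of generality assume $d(M_1) \le d(M_2)$; then replacing $d(M_1)$ by $d(M_2)$ in the numerator gives the bound
$$d(M_1 \oplus M_2) \;\le\; \frac{r(M_1)\,d(M_2) + r(M_2)\,d(M_2)}{r(M_1) + r(M_2)} \;=\; d(M_2) \;=\; \max\{d(M_1),d(M_2)\},$$
with equality iff $r(M_1)\bigl(d(M_2)-d(M_1)\bigr) = 0$, i.e.\ iff $d(M_1) = d(M_2)$. There is no real obstacle here; the only point requiring care is the additivity of $\varepsilon$ across direct sums, and this is immediate from the absence of cross-summand parallel relations.
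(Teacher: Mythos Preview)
Your proof is correct. The paper actually omits the proof of this lemma entirely, stating only ``We omit the elementary proof of the next result,'' so there is no paper argument to compare against; your weighted-average computation is precisely the elementary argument being alluded to.
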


The next result will be useful in identifying the density-critical matroids of density at most two.

\begin{lemma}
\label{elconn}
Let $M$ be a density-critical matroid with $d(M) \le 2$. If $(X_1,X_2)$ is a $2$-separation  of $M$, then there is an element $p$ in $\cl(X_1) \cap \cl(X_2)$, and $M = P(M|(X_1 \cup \{p\}), M|(X_2 \cup \{p\}))$. 
\end{lemma}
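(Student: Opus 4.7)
The plan proceeds in three steps. First, rank arithmetic shows $r(\cl(X_1)\cap\cl(X_2))\le 1$. Second, density-criticality together with $d(M)\le 2$ forces this intersection to be non-empty. Third, the standard 2-separation / parallel-connection dictionary identifies $M$ as the claimed parallel connection.

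For step one, note that $M$ is simple by the definition of density-critical, and connected by Lemma~\ref{crit}: a non-trivial direct sum $M=M_1\oplus M_2$ would yield $d(M)\le\max\{d(M_i)\}$, but density-criticality demands $d(M_i)<d(M)$ for each $i$, a contradiction. Since $(X_1,X_2)$ is a $2$-separation, $r(X_1)+r(X_2)=r(M)+1$, and because $\cl(X_1)\cup\cl(X_2)\supseteq X_1\cup X_2=E(M)$ has rank $r(M)$, submodularity gives
$$r\bigl(\cl(X_1)\cap\cl(X_2)\bigr)\le r(X_1)+r(X_2)-r(M)=1.$$

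For step two, observe that both $U_{2,4}$ and $M(K_4)$ have density~$2$; if $M$ had either as a proper minor, density-criticality would force $d(M)>2$, contradicting the hypothesis. Since $U_{2,4}$ and $M(K_4)$ are $3$-connected while $M$ admits a $2$-separation, $M$ is equal to neither, and hence has no $U_{2,4}$- or $M(K_4)$-minor at all. Lemma~\ref{dir} then gives $|E(M)|\le 2r(M)-1$, so $d(M)<2$ strictly and $M$ is a series-parallel network. Now suppose for contradiction that $\cl(X_1)\cap\cl(X_2)=\emptyset$. Then $X_1$ and $X_2$ are disjoint flats of $M$, and any triangle $\{a,b,c\}$ with $a\in X_1$ and $b,c\in X_2$ would place $a\in\cl(\{b,c\})\subseteq\cl(X_2)$, contradicting emptiness. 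Hence every triangle of $M$ is confined to a single side, so by Lemma~\ref{lem0} each element sees its triangles only within its own part. The main obstacle is to combine this triangle-localization with the bounds $|X_i|<2r_i$ coming from $d(M|X_i)<d(M)<2$ and with the series-parallel structure to produce a contradiction---morally, to exhibit an element whose contraction inflates the density (in the spirit of how $U_{3,4}$ fails to be density-critical because contracting any element returns $U_{2,3}$, of strictly larger density; the empty-intersection case is exactly the ``virtual basepoint'' 2-sum that such a series element would produce).

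Finally, once the intersection is non-empty, it is a rank-one flat in a simple matroid, hence a singleton $\{p\}$ with $p\in E(M)$. After relabeling we may assume $p\in X_1$, and we set $M_1:=M|(X_1\cup\{p\})=M|X_1$ and $M_2:=M|(X_2\cup\{p\})$. The identity $M=P(M_1,M_2)$ then follows from the standard circuit description of parallel connections: circuits of $P(M_1,M_2)$ contained in some $X_i\cup\{p\}$ coincide with the corresponding circuits of $M$, while circuits of the form $(C_1\cup C_2)\setminus\{p\}$ with each $C_i\ni p$ a circuit of $M_i$ match precisely the circuits of $M$ crossing $(X_1,X_2)$, as one checks using the rank equation $r(X_1)+r(X_2)=r(M)+1$ together with $p\in\cl(X_1)\cap\cl(X_2)$.
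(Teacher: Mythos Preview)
Your step two is incomplete, and you acknowledge as much: the sentence beginning ``The main obstacle is to combine\ldots'' flags the gap without closing it. The triangle-localization you set up is correct, but neither it nor the bound $|X_i|<2r(X_i)$ from $d(M|X_i)<d(M)<2$ yields a contradiction---summing the latter over $i$ gives only $|E(M)|<2(r(M)+1)$, which is vacuous.

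The paper's argument is a short piece of arithmetic that bypasses the series-parallel detour entirely. Write $M=M_1\oplus_2 M_2$ with basepoint $q\notin E(M)$, so $E(M_i)=X_i\cup\{q\}$ and $r(M_i)=r(X_i)$. The hypothesis $\cl(X_1)\cap\cl(X_2)=\emptyset$ says precisely that neither $M_i$ has an element parallel to $q$; since $M|X_i$ is already simple, both $M_i$ are then simple proper minors of $M$, so $d(M_i)<d(M)$. The point you missed is that $|E(M_i)|=|X_i|+1$, not $|X_i|$: summing $|X_i|+1<d(M)\,r(X_i)$ over $i=1,2$ and using $r(X_1)+r(X_2)=r(M)+1$ gives
\[|E(M)|+2<d(M)\bigl(r(M)+1\bigr)=|E(M)|+d(M),\]
so $d(M)>2$, contradicting the hypothesis. (The paper arranges the same arithmetic slightly differently, cross-multiplying to conclude that some $d(M_i)>2$.) Working with the $2$-sum parts $M_i$ rather than the restrictions $M|X_i$ is exactly what recovers the missing $+2$; Lemma~\ref{dir} and the triangle argument are not needed here.
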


\begin{proof}
As $(X_1,X_2)$ is a $2$-separation of $M$,  for some element $q$ not in $E(M)$, we can write $M$ as $M_1 \oplus_2 M_2$ where each $M_i$ has ground set $X_i \cup \{q\}$. Let $|E(M_i)| = n_i$ and $r(M_i) = r_i$. Assume that both $M_1$ and $M_2$ are simple. Then $\tfrac{|E(M)|}{r(M)} > \tfrac{|E(M_1)|}{r(M_1)}$, so
$$\frac{n_1 + n_2-2}{r_1 + r_2-1} > \frac{n_1}{r_1}.$$
Hence $$r_1n_2 - 2r_1 > r_2n_1 - n_1.$$
By symmetry,
$$r_2n_1 - 2r_2 > r_1n_2 - n_2.$$
Adding the last two inequalities gives $n_1 + n_2 > 2(r_1 + r_2),$ so $n_i > 2r_i$ for some $i$. Thus $d(M_i) > 2$. Since $M$ is density-critical with density at most two, this is a contradiction. 
We conclude that $M_1$ or $M_2$, say $M_1$, is non-simple. Thus it has an element $p$ in parallel with the basepoint $q$ of the $2$-sum. Hence  
$M = P(M|(X_1 \cup \{p\}), M|(X_2 \cup \{p\}))$. 
\end{proof}

%The next lemma identifies the density-critical matroids of density at most two. Recall that, for all $n \ge 2$, we denote by $P_n$   any matroid that can be constructed from $n$ copies of $M(K_3)$ via a sequence of parallel connections.

%\begin{lemma}
%\label{le2}
%The following are the density-critical matroids of density at most two: $U_{1,1}$, $U_{2,3}$, all $P_n$ with $n  \ge 2$, $U_{2,4}$, and $M(K_4)$.
%\end{lemma}

\begin{lemma}
\label{F1} 
Let $N$ be a simple connected matroid in which all but at most one element is in at least two triangles. Then $N$ has no $2$-cocircuits. Moreover, if $N$ has $\{a,b,c\}$ as a triad, then either
\begin{itemize}
\item[(i)] $\{a,b,c\}$ is contained in a $4$-point line  and $N = P(U_{2,4}, N\del \{a,b,c\})$; or 
\item[(ii)] $N$ has a triangle $\{x,y,z\}$ such that $N|\{a,b,c,x,y,z\} \cong M(K_4)$ and $N$ is the generalized parallel connection of $N|\{a,b,c,x,y,z\}$ and \linebreak $N\del \{a,b,c\}$ across the triangle $\{x,y,z\}$.
\end{itemize}
\end{lemma}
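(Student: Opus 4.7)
The plan is to prove all three claims using one common tool: orthogonality between circuits and cocircuits, together with the fact that a hyperplane is a flat and so contains the closure of any of its subsets. The same hyperplane-versus-flat contradiction will reappear, varying only which cocircuit and which triangles feed into it.

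For the no-$2$-cocircuit claim, I would suppose $\{a,b\}$ is a $2$-cocircuit. The hypothesis puts $a$ (say) in two triangles, and orthogonality with $\{a,b\}$ forces both to contain $b$, producing distinct triangles $\{a,b,c_1\}, \{a,b,c_2\}$. Then $\cl(\{c_1,c_2\}) = \cl(\{a,b\})$ contains $a$ and $b$, but the hyperplane $H := E(N)\setminus\{a,b\}$ is a flat containing $c_1, c_2$, and hence contains $\cl(\{c_1,c_2\})$, contradicting $a, b \notin H$.

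For the triad claim, set $H := E(N)\setminus\{a,b,c\}$, a hyperplane and hence a flat. Orthogonality forces every triangle meeting $\{a,b,c\}$ to do so in at least two elements. I would split on whether $\{a,b,c\}$ is a triangle. In case (i), pick a non-exceptional $a$: a second triangle through $a$ is, after relabeling, $\{a,b,x\}$ with $x\notin\{a,b,c\}$. The line $L := \cl(\{a,b\})$ contains $\{a,b,c,x\}$, and the same flat-contains-closure argument forbids any fifth point $w$ on $L$ (else $x, w \in H$ yet $\cl(\{x,w\}) = L \ni a,b$), so $L$ is a $4$-point line. The identity $r(L) + r(H) = r(N) + 1 = r(L\cup H) + r(L\cap H)$ with $L\cap H = \{x\}$, together with the modularity of singleton flats in simple matroids, would then identify $N$ as $P(U_{2,4}, N\setminus\{a,b,c\})$ with basepoint $x$.

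In case (ii), $\{a,b,c\}$ is independent. Applying the hypothesis to the at-least-two non-exceptional members of $\{a,b,c\}$, together with the no-extra-point-on-a-line argument for each pair in $\{a,b,c\}$, would produce unique elements $x, y, z \in E(N)\setminus\{a,b,c\}$ with $\{b,c,x\}, \{a,c,y\}, \{a,b,z\}$ triangles (which automatically gives $c$ its two triangles as well, even if $c$ is the exceptional element). Strong circuit elimination would show $x, y, z$ distinct: $x=y$ would, via elimination of $\{a,c,x\}$ and $\{b,c,x\}$ at $c$, give $\{a,b,x\}$ as a triangle, hence $x=z$, and then one further elimination among the three triangles through $x$ would force $\{a,b,c\}$ to be a triangle, contradicting independence. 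Next, $\{x,y,z\} \subseteq \cl(\{a,b,c\})$, a rank-$3$ flat; were $\{x,y,z\}$ independent, $\cl(\{x,y,z\}) = \cl(\{a,b,c\})$ would contain $a, b, c$, but $x, y, z \in H$ (a flat) forces $\cl(\{x,y,z\}) \subseteq H$, a contradiction. So $\{x,y,z\}$ is a triangle, giving four triangles on the rank-$3$ six-element restriction $N|\{a,b,c,x,y,z\}$, which is therefore $M(K_4)$. The analogous rank-modularity calculation with $r(\{a,b,c,x,y,z\}\cap H) = r(\{x,y,z\}) = 2$, combined with the fact that triangles are modular flats in $M(K_4)$, would then identify $N$ as the generalized parallel connection of $N|\{a,b,c,x,y,z\} \cong M(K_4)$ and $N\setminus\{a,b,c\}$ across $\{x,y,z\}$.

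The hardest part will be case (ii): tracking distinctness of $x, y, z$ via circuit elimination, carefully handling the possibility that one of $a, b, c$ is the hypothesis's exceptional element, and translating the rank-modularity data into a standard (generalized) parallel connection decomposition.
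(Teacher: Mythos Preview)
Your proposal is correct and follows essentially the same route as the paper's proof: orthogonality to locate the extra triangles through elements of the triad, a rank argument using the complementary hyperplane to force $\{x,y,z\}$ dependent, and the modularity of a triangle in $M(K_4)$ to obtain the generalized-parallel-connection decomposition. The paper is considerably terser---it asserts distinctness of $x,y,z$ without argument, computes $r(\{x,y,z\})\le 2$ via a one-line submodularity inequality rather than your closure-inside-a-flat formulation, and simply cites Brylawski (Oxley, Proposition~11.4.15) for the last step---but the underlying mechanism is the same.
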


\begin{proof}
As $N$ has at most one element that is not in at least two triangles, $N$ has no $2$-cocircuits. Suppose $\{a,b,c\}$ is a triad of $N$. If $\{a,b,c\}$ is also a triangle, then $\{a,b,c\}$ is $2$-separating in $N$. Moreover, $\{a,b,c\}$ is contained in a $4$-point line $\{a,b,c,d\}$ and (i) holds.

We may now assume that $\{a,b,c\}$ is not a triad of $N$. Then, because at least two of $a, b,$ and $c$ are in at least two triangles, the hyperplane $E(N) - \{a,b,c\}$ of $N$ contains distinct elements $x$, $y$, and $z$ such that $\{a,b,z\}$, $\{a,y,c\}$, and $\{x,b,c\}$ are triangles. Now 
\begin{align*}
r(\{x,y,z\}) & \le r(E(N) - \{a,b,c\}) + r(\cl(\{a,b,c\})) - r(N)\\
& = r(N) - 1 + 3 - r(N) = 2.
\end{align*}
Thus $\{x,y,z\}$ is a triangle of $N$ and $N|\{a,b,c,x,y,z\} \cong M(K_4)$. It follows by a result of Brylawski~\cite{THB75} (see also \cite[Proposition 11.4.15]{JGO11}) that (ii) holds.
\end{proof}

\begin{corollary}
\label{F3}
Let $N$ be a simple connected matroid in which all but at most one element is in at least two triangles and $d(N) \le \tfrac{9}{4}$. If $r(N) = 2$, then $N \cong U_{2,4}$. If $r(N) = 3$, then $N \cong M(K_4)$. If $r(N) = 4$, then $N  \cong P(U_{2,4}, M(K_4)),      M(K_5 \del e)$, or $M^*(K_{3,3})$.
\end{corollary}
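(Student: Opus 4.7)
The plan is to handle the three rank cases in turn. For $r(N) = 2$, simplicity gives $N \cong U_{2,n}$; density forces $n \le 4$, while every element of $U_{2,n}$ lies in $\binom{n-1}{2}$ triangles, so the triangle hypothesis forces $n \ge 4$, giving $N \cong U_{2,4}$. For $r(N) = 3$, the density bound gives $|E(N)| \le 6$. Two distinct triangles in a simple connected rank-$3$ matroid of density at most $9/4$ share at most one element (else a $4$-point line would force a coloop among the remaining elements, contradicting connectedness), so an element lying in $k$ triangles is incident with $2k+1$ distinct elements, bounding each element to at most $\lfloor(|E(N)|-1)/2\rfloor$ triangles. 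Combined with the triangle hypothesis, a short incidence count rules out $|E(N)| \le 5$ and forces $|E(N)| = 6$ with exactly four triangles and every element in exactly two; the resulting triangle-element incidence is uniquely that of $M(K_4)$.

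For $r(N) = 4$, the density bound gives $|E(N)| \le 9$, and Lemma~\ref{F1} gives no $2$-cocircuits. Suppose first that $N$ has a triad $\{a,b,c\}$. Lemma~\ref{F1} yields either (i) $N \cong P(U_{2,4}, N_0)$ or (ii) $N$ is the generalized parallel connection of $M(K_4)$ with $N_0$ across a triangle $\{x,y,z\}$, where in each case $N_0$ is a simple connected rank-$3$ matroid with $|E(N_0)| \le 6$. In case (i), every element of $N_0$ outside the basepoint has the same triangles in $N_0$ as in $N$, so $N_0$ satisfies the rank-$3$ hypothesis with at most one additional exception (the basepoint); the rank-$3$ argument then forces $N_0 \cong M(K_4)$, giving $N \cong P(U_{2,4}, M(K_4))$. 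In case (ii), elements of $N_0$ outside $\{x,y,z\}$ retain the same triangles and the same enumeration forces $N_0 \cong M(K_4)$; since gluing two copies of $M(K_4)$ across a triangle yields $M(K_5 \setminus e)$, we obtain $N \cong M(K_5 \setminus e)$.

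The remaining case $r(N) = 4$ with no triad is the main obstacle. Every cocircuit has size at least $4$, so every hyperplane has at most $|E(N)| - 4$ elements. If $|E(N)| = 7$, hyperplanes have at most $3$ elements and cannot contain a triangle, contradicting the triangle hypothesis. If $|E(N)| = 8$, two distinct $3$-point lines through any common element $e$ would span a rank-$3$ hyperplane of $5$ elements, contradicting the hyperplane bound of $4$; so each element is in at most one triangle, again contradicting the hypothesis. Hence $|E(N)| = 9$, and $d(N) = 9/4$. Triangle counting gives $t \ge 6$ triangles; one then argues that each element lies in exactly two triangles (ruling out higher multiplicities using the tight interplay of the hyperplane-size bound $|H| \le 5$ with the $9$-element constraint and the rank-$4$ structure of lines through a common element), whence $t = 6$. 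The resulting triangle-element incidence is a $(3,2)$-biregular bipartite graph of girth at least $6$ on $6 + 9$ vertices, and a circuit-elimination argument shows that the only such graph underlying a matroid is the Levi graph of $K_{3,3}$, giving $N \cong M^*(K_{3,3})$. The technical heart is this last uniqueness: although a ``$6$-cycle'' combinatorial variant exists among bipartite $(3,2)$-biregular girth-$\ge 6$ graphs on $15$ vertices, matroid circuit-elimination forces additional $4$-circuits there that are incompatible with the triangle count $t = 6$.
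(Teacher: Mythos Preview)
Your treatment of the small-rank cases and of the rank-$4$ triad case is close to the paper's (which omits the $r(N)\le 3$ arguments entirely and handles the triad case via Lemma~\ref{F1} just as you do). One small repair: in your rank-$3$ argument, a $4$-point line in a $6$-element rank-$3$ connected matroid leaves a $2$-cocircuit, not a coloop; you want Lemma~\ref{F1}'s ``no $2$-cocircuits'' conclusion to finish that step.

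The genuine divergence is in the rank-$4$ no-triad case. The paper does not count incidences. Having established $|E(N)|=9$ and that every plane has at most five points, it fixes a $5$-point plane $\{x,f_1,g_1,f_2,g_2\}$ containing two triangles through $x$, takes $\{a,b,c,d\}$ to be the complementary cocircuit, locates triangles $\{a,b,g_1\}$ and $\{a,c,g_2\}$ from the triangle hypothesis, and then applies Lemma~\ref{ww0} to recognise $N\backslash d$ as a rank-$4$ wheel or whirl. The triangle hypothesis on the rim elements $f_1,b,c,f_2$ then forces the triangles through $d$, and repeating for two more single-element deletions pins $N$ down as $M^*(K_{3,3})$.

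Your incidence-counting route is an interesting alternative, but it has a real gap: you never prove that every element lies in \emph{exactly} two triangles. The ``tight interplay'' sentence is a placeholder, not an argument. One must first rule out $4$-point lines (each point of such a line already lies in three triangles) and then rule out a point $e$ lying in three triangles on distinct lines; the latter does not fall out of the hyperplane bound $|H|\le 5$ alone, since two of the triangles through $e$ fill a $5$-point plane $P$ while the third can have both of its non-$e$ points outside $P$ without any immediate contradiction. Until this step is done, the $(3,2)$-biregular classification cannot begin. Separately, your stated reason for eliminating the ``prism'' incidence variant (``circuit-elimination forces additional $4$-circuits'') is not the actual obstruction: in that configuration the three pairwise-intersecting triangles span a $6$-element rank-$3$ set, so some hyperplane has at least six points, violating $|H|\le 5$ directly. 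The paper's wheel/whirl argument via Lemma~\ref{ww0} sidesteps all of this bookkeeping.
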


\begin{proof}
We omit the straightforward proof for the case when $r(N) \in \{2,3\}$. Assume $r(N) = 4$. By Lemma~\ref{F1}, $N$ has no $2$-cocircuits. Now suppose    $N$ has $\{a,b,c\}$ as a triad. If (i) of Lemma~\ref{F1} holds, then $N = P(U_{2,4}, N\del \{a,b,c\})$. By the result in the rank-$3$ case, $N\del \{a,b,c\} \cong M(K_4)$, so $N  \cong P(U_{2,4}, M(K_4))$. If, instead, (ii) of Lemma~\ref{F1} holds, then $N$ is the generalized parallel connection across a triangle $\{x,y,z\}$ of $M(K_4)$ and $N\del \{a,b,c\}$. In the latter, $E(N\del \{a,b,c,x,y,z\})$ must be a triad of $N$, so $N\del \{a,b,c\} \cong M(K_4)$. Hence $N$ is the generalized parallel connection across a triangle of two copies of $M(K_4)$, so $N \cong M(K_5\del e)$. 

We may now assume that $N$ has no triads. Then every cocircuit of $N$ has at least four elements. As $N$ certainly has a plane that contains two intersecting triangles, $\{x,f_1,g_1\}$ and $\{x,f_2,g_2\}$, we deduce that $|E(N)| \ge 9$, so $|E(N)| = 9$. Let $\{a,b,c,d\}$ be the cocircuit $E(N) - \{x,f_1,f_2,g_1,g_2\}$. Because $N$ has no plane with more than five points and has all but at most one element in two triangles, we may assume that $\{a,b,g_1\}$ and $\{a,c,g_2\}$ are triangles of $N$. Then $N\del d$ has $\{x,f_1,g_1\}, \{g_1,b,a\}, \{a,c,g_2\}, \{g_2,f_2,x\}$ as triangles. %Since it also has four triads covering all of the elements, it must be 
By Lemma~\ref{ww0}, $N\del d$ is a rank-$4$ wheel or whirl. In this matroid, $f_1$, $b$, $c$, and $f_2$ are in unique triangles. It follows that $N$ must have $\{d,f_1,c\}$ and $\{d,b,f_2\}$ as triangles. Thus $N\del d$ is a rank-$4$ wheel. Likewise, $N\del f_1$ and $N\del c$ are also rank-$4$ wheels, so $N \cong M^*(K_{3,3})$. %To see this, note that the simplification of every single-element contraction of $N$ is $M(K_4)$. Thus $N$ is binary having no $F_7$-minor. Its six known triangles imply that it has no $F_7^*$-minor. As it is not $M(K_5 \del e)$, it is not graphic. Thus, by a theorem of Tutte~\cite{WTT59} (see also \cite[Theorem~6.6.7]{JGO11}), $N$  must be isomorphic to $M^*(K_{3,3})$.
\end{proof}

\begin{lemma}
\label{F3.1} 
Let $N$ be a simple  matroid of rank at least three in which every element is in at least two triangles. Suppose $ e \in E(N)$. Then 
\begin{itemize}
\item[(i)] $e$ is in a plane of $N$ having at least seven points; or 
\item[(ii)] every element of $\si(N/e)$ is in at least two triangles; or 
\item[(iii)]  $N$ has a  $U_{2,4}$-  or $M(K_4)$-restriction using $e$. 
\end{itemize}
\end{lemma}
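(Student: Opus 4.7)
The plan is to prove the contrapositive: suppose (i) and (iii) both fail, so no plane of $N$ through $e$ has seven or more points and no $U_{2,4}$- or $M(K_4)$-restriction of $N$ uses $e$; I must show that every $[f]\in E(\si(N/e))$ lies in at least two triangles of $\si(N/e)$. The basic dictionary I will use: each plane $P$ of $N$ with $\{e,f\}\subseteq P$ and at least three lines of $N$ through $e$ lying in $P$ (write $\lambda(P,e)$ for this count) supplies $\binom{\lambda(P,e)-1}{2}$ triangles of $\si(N/e)$ through $[f]$, all on a single line, and distinct such planes give triangles on distinct lines of $\si(N/e)$. Thus it suffices to exhibit either one plane $P\supseteq\{e,f\}$ with $\lambda(P,e)\geq 4$, or two distinct such planes each with $\lambda(P,e)\geq 3$.

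Two observations, both direct consequences of (iii), drive the argument. First, $|\cl_N(\{e,f\})|\leq 3$, so at most one $g$ satisfies that $\{e,f,g\}$ is a triangle of $N$. Second, for every triangle $T=\{f,x,y\}$ of $N$ with $e\notin T$, one must have $e\notin\cl_N(T)$ (else $\cl(T)$ is a four-point line through $e$), so $P_T:=\cl_N(\{e\}\cup T)$ is a plane on which $\cl(\{e,f\})$, $\cl(\{e,x\})$, $\cl(\{e,y\})$ are three pairwise distinct lines through $e$, giving $\lambda(P_T,e)\geq 3$.

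I then split into cases according to how many of the at least two triangles through $f$ in $N$ contain $e$. If all of them do, two such triangles place four points on $\cl_N(\{e,f\})$, contradicting (iii). If at least two triangles $T_1,T_2$ through $f$ avoid $e$, I compare $P_{T_1}$ and $P_{T_2}$: if they differ, two distinct triangles of $\si(N/e)$ through $[f]$ emerge immediately; if they coincide to some plane $P$ with $|T_1\cap T_2|\geq 2$, the shared pair $\{f,z\}$ gives a four-point line of $N$ not through $e$ whose image in $\si(N/e)$ has four distinct points containing $[f]$; otherwise $T_1\cap T_2=\{f\}$ and $|P|\leq 6$ by (i). The remaining case — exactly one triangle $T$ through $f$ avoids $e$, and exactly one triangle $\{e,f,g\}$ contains $e$ — is handled in parallel: $g$ must lie in some further triangle $T'$ avoiding both $e$ and $f$, yielding a second plane $P'$, and the coincidence $P_T=P'$ is treated just as before.

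The main obstacle is this coincident-plane subcase, where a single six-point plane $P$ of $N$ containing the specified triangles must be analyzed. Using (iii) to cap each line through $e$ at three points, I will enumerate how the five non-$e$ elements of $P$ distribute across lines through $e$: either $\lambda(P,e)\geq 4$ (done), or $\lambda(P,e)=3$ and the five non-$e$ elements split as $2+2+1$, in which case the forced triangles of $P$ are exactly four and $P\cong M(K_4)$ with $e$ as one of its points, contradicting (iii). The delicate step is excluding any additional triangles inside $P$, since each potential extra triangle would either merge two rank-$2$ flats of $P$ into a four-point line through $e$ (contradicting (iii)) or push $|P|$ above six (contradicting (i)).
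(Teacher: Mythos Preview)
Your argument is correct and follows essentially the same strategy as the paper: assume (i) and (iii) fail and show (ii), using that each triangle of $N$ avoiding $e$ yields a triangle of $\si(N/e)$, with the only obstruction to finding two such triangles being an $M(K_4)$-restriction through $e$ inside a six-point plane.

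The main difference is organizational. The paper splits according to whether the element of $\si(N/e)$ under consideration arises from a triangle $\{e,c_1,c_2\}$ of $N$ or from an element $f$ in no triangle with $e$. In the first case it takes one triangle through each of $c_1$ and $c_2$ avoiding $e$; in the second it takes two triangles through $f$. In either case, if the relevant set has rank four over $e$ one is done immediately, and otherwise a short direct check inside the resulting six-point plane forces $M(K_4)$ on $e$. Your $\lambda(P,e)$ bookkeeping and three-way case split achieve the same thing but at greater length; in particular, your ``coincident-plane subcase'' with the $2{+}2{+}1$ enumeration is precisely the paper's one-line observation that the six-element configuration is $M(K_4)$. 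One small point worth tightening: in your ``remaining case'' (one triangle $\{e,f,g\}$ and one triangle $T$ through $f$ avoiding $e$), the claim that $T'$ through $g$ avoids both $e$ and $f$, and that the coincident plane then has exactly six points, needs the observation that $T'=\{g,a,b\}$ with $T=\{f,a,b\}$ would force $e\in\cl(\{a,b\})$ and hence a long line through $e$ --- this is easy but not quite ``just as before''. Also, your phrase ``push $|P|$ above six'' is not what actually happens; every spurious extra triangle in the $2{+}2{+}1$ plane in fact produces a four-point line through $e$, contradicting (iii).
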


\begin{proof}
Assume that neither (i) nor (iii) holds. We show that every element of $\si(N/e)$ is in at least two triangles.  First consider  a triangle $\{e,c_1,c_2\}$ of $N$ containing $e$.  Let $\{c_1,d_1,f_1\}$ and 
$\{c_2,d_2,f_2\}$ be triangles of $N$ where neither contains $e$. If $r(\{e,c_1,d_1,f_1,c_2,d_2,f_2\}) = 4$, then, in $\si(N/e)$, the element $c$ corresponding to $c_1$ and $c_2$ is in at least two triangles. Now suppose $r(\{e,c_1,d_1,f_1,c_2,d_2,f_2\}) = 3$. Since $N$ has no plane with more than six points, we may assume that $f_1 = f_2$. Rename this element $f$. If $\{e,d_1,d_2\}$ is not a triangle, then $\si(N/e)$ has a $4$-point line containing $c$, so $c$ is in at least two triangles of this matroid. If $\{e,d_1,d_2\}$ is   a triangle of $N$, then $N|\{e,c_1,c_2,d_1, d_2,f\}) \cong M(K_4)$, a contradiction.

Now let $f$ be an element of $N$ that is not in a triangle with $e$. Let $\{f,g_1,h_1\}$ and $\{f,g_2,h_2\}$ be triangles of $N$. Then $\si(N/e)$ has at least two triangles containing $f$ otherwise 
$N|\{e,f, g_1,g_2, h_1,h_2\}) \cong M(K_4)$,  a contradiction.
\end{proof}

Recall that $M_{18}$ is the $18$-element matroid that is obtained by attaching, via parallel connection, a copy of $M(K_4)$  at each element of an $M(K_3)$. 

\begin{lemma}
\label{F8.1} 
Let $N$ be a simple connected non-empty matroid in which every element is in a $U_{2,4}$-  or $M(K_4)$-restriction. Assume that $d(N) \le \tfrac{9}{4}$ but $d(N') < \tfrac{9}{4}$  for all proper minors $N'$ of $N$. Then $N$ is isomorphic to $U_{2,4}, M(K_4), P (U_{2,4}, M(K_4)),    P(M(K_4), M(K_4)), 
 M(K_5 \del e)$, or $M_{18}$.
\end{lemma}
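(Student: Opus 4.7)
The plan is to proceed by induction on $r(N)$, with a key enumeration of the allowed parallel-connection structures. For the base cases $r(N)\le 4$: since every element of $N$ lies in a $U_{2,4}$- or $M(K_4)$-restriction, each element lies in at least two triangles, so Corollary~\ref{F3} gives $N \in \{U_{2,4}, M(K_4), P(U_{2,4},M(K_4)), M(K_5\del e), M^*(K_{3,3})\}$. I exclude $M^*(K_{3,3})$: all its rank-$2$ flats are three-point lines (images of the vertex-star triads of $K_{3,3}$), precluding a $U_{2,4}$-restriction; and its six-element restrictions are cographic, of the form $M^*(K_{3,3}/F)$, but contracting three edges of $K_{3,3}$ leaves at most three vertices, too few to yield $K_4$. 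The four remaining matroids all lie in the target list.

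A preparatory observation strengthens the structure available in the inductive step: by Corollary~\ref{dir2} combined with density-criticality, every $U_{2,4}$-restriction of $N$ is a rank-$2$ flat of exactly four points, and every $M(K_4)$-restriction is a rank-$3$ flat of exactly six points (otherwise a simple connected proper minor of rank $2$ or $3$ with density exceeding $\tfrac{9}{4}$ would arise).  Now assume $r(N)\ge 5$.  First, $N$ possesses an $M(K_4)$-restriction: otherwise every element lies only in $U_{2,4}$-restrictions, and a parallel-connection analysis shows $N$ must equal a single $U_{2,4}$ (since $P(U_{2,4},U_{2,4})$ already has density $\tfrac{7}{3}>\tfrac{9}{4}$), contradicting $r(N)\ge 5$.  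Pick an $M(K_4)$-restriction $R$, and use submodularity of $r_N$ together with the density bound $|E(N)|\le\tfrac{9}{4}r(N)$ to show $\lambda_N(R) = 3 + r_N(E(N)\setminus R) - r(N) \le 2$. When $\lambda_N(R)=1$, $R$ is $2$-separating and a vertex-star of $R \cong M(K_4)$ opposite the attachment basepoint is a triad of $N$; when $\lambda_N(R)=2$, $R$ attaches to $E(N)\setminus R$ across a triangle $T \subseteq R$, and the $M(K_4)$-star opposite $T$ is again a triad. Lemma~\ref{F1} then decomposes $N$: either $N = P(U_{2,4}, N_0)$ (case (i)), or $N$ is the generalized parallel connection across a triangle of an $M(K_4)$-copy and $N_0 := N \del \{a,b,c\}$ (case (ii)).

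To conclude, observe that $N_0$ need not itself satisfy the hypothesis of the lemma; for instance, decomposing $M_{18}$ along a triad leaves an $N_0$ with a ``free'' triangle not contained in any $U_{2,4}$- or $M(K_4)$-restriction of $N_0$.  I therefore strengthen the induction: admissible matroids form a tree whose nodes are copies of $U_{2,4}$, $M(K_4)$, or $M(K_3)$, connected by single-point or triangle-based parallel connections, subject to the constraint that every element of each $M(K_3)$-node is an attachment-point of some $U_{2,4}$- or $M(K_4)$-node. Writing $k,\ell,m$ for the numbers of $M(K_4)$-, $U_{2,4}$-, and $M(K_3)$-nodes and $t$ for the number of triangle connections, the inequality $d(N) \le \tfrac{9}{4}$ reduces to $2k + 3\ell + t - m \le 5$; a brief enumeration of integer solutions compatible with the attachment constraint yields exactly six admissible trees, which correspond to the six matroids in the list. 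The main obstacle is the triad-existence step: it rests on the delicate interplay between the density bound, the rigidity of $U_{2,4}$- and $M(K_4)$-restrictions as minimal-size flats (Corollary~\ref{dir2}), and a careful submodularity argument for $\lambda_N(R)$.
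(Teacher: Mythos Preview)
Your approach is genuinely different from the paper's, and the overall shape---peel off an $M(K_4)$- or $U_{2,4}$-piece of low connectivity, find a triad inside it, apply Lemma~\ref{F1}, and iterate into a tree decomposition---is attractive. The paper instead argues globally: it first disposes of the case where $N$ contains a $4$-point line (sublemma~\ref{no24}), and then, assuming every element lies in an $M(K_4)$-restriction, analyses directly how two or three such restrictions can intersect (sublemmas~\ref{intn}--\ref{intn3}), repeatedly invoking Corollary~\ref{dir2} to manufacture proper minors of density at least $\tfrac{9}{4}$. No induction on rank and no tree enumeration appear.

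Your argument, however, has a real gap at exactly the point you flag as ``the main obstacle.'' First, the bound $\lambda_N(R)\le 2$ does not follow from $|E(N)|\le\tfrac{9}{4}\,r(N)$ and submodularity alone: you need $r(E(N)\setminus R)\le r(N)-1$, and for $r(N)\ge 5$ the density inequality permits $|E(N)\setminus R|\ge r(N)$, so $E(N)\setminus R$ could in principle span. Ruling this out requires a further argument using the hypothesis on $U_{2,4}$- and $M(K_4)$-restrictions. Second, and more seriously, even granting $\lambda_N(R)=2$, the flat $F=R\cap\cl(E(N)\setminus R)$ is a line of $M(K_4)$, but $M(K_4)$ has two-point lines as well as triangles (the three ``opposite-edge'' pairs). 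If $F$ is a two-point line, then \emph{no} triad of $M(K_4)$ avoids $F$, so $R\setminus F$ is a $4$-element cocircuit of $N$, not a triad, and Lemma~\ref{F1} does not apply. You have not excluded this case.

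Finally, the tree-enumeration endgame is too compressed to be a proof. You acknowledge that $N_0$ need not satisfy the lemma's hypothesis, but the ``strengthened induction'' is never formulated precisely, the inequality $2k+3\ell+t-m\le 5$ is asserted without derivation, and the claim that exactly six trees satisfy the constraints is not checked. Since the whole classification rests on this enumeration, it cannot be left implicit. The paper avoids all of this by working directly with the $M(K_4)$-restrictions and never needing to set up an inductive framework.
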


\begin{proof}
Since $d(N') \le \tfrac{9}{4}$ for all minors $N'$ of $N$, we see that, in any such $N'$,  no line   has more than four points and no plane has more than six points. Next we show the following.

\begin{sublemma}
\label{no24}
If $N$ has a $4$-point line, then $N$   is isomorphic to $U_{2,4}$ or $P (U_{2,4}, M(K_4))$.
\end{sublemma}

This is immediate if $r(N) = 2$. Because $N$ has no plane with more than six points, $r(N) \neq 3$. Let $L$ be a $4$-point line of $N$ and let $Z$ be a subset of $E(N)$ not containing $L$ such that $N|Z$ is isomorphic to $U_{2,4}$ or $M(K_4)$. If $L\cap Z \neq \emptyset$, then again, since $N$ has no plane with more than six points, we deduce that $N \cong P (U_{2,4}, M(K_4))$. We may now assume that $L \cap Z = \emptyset$. If $r(L \cup Z) \le r(Z) + 1$, then $N$ has a rank-$3$ or rank-$4$ restriction of density exceeding $\tfrac{9}{4}$, a contradiction. We deduce that $r(L \cup Z) = r(Z) + 2$.

By Corollary~\ref{dir2}, $N$ has a simple connected minor $N'$ such that $N'|(L \cup Z) = N|(L \cup Z)$  and $r(N') = r(Z) + 2$. As $N'$ is connected, it has an element $x'$ that is not in the closure of $L$ or of $Z$. Then $N'/x'$ has $N|L$ and $N|Z$ as restrictions and has rank $r(Z) + 1$. Thus 
 $\si(N'/x')$ has either a plane with more than six points or has $P (U_{2,4}, M(K_4))$  as a restriction. Each possibility yields a contradiction, so \ref{no24} holds.
 
 We may now assume that every element of $N$ is in  an $M(K_4)$-restriction. We may also assume that $N$ is not isomorphic to $M(K_4)$ or $P(M(K_4), M(K_4))$. Next we show the following.
 
 \begin{sublemma}
\label{intn}
Let $X$ and $Y$ be distinct subsets of $E(N)$ such that both $N|X$ and $N|Y$ are isomorphic to $M(K_4)$. If $|X \cap Y| \ge 2$, then $N \cong M(K_5 \del e)$.
\end{sublemma}

Since $N$ has no plane with more than six points, $r(X \cup Y) > 3$. As $|X \cap Y| \ge 2$, it follows by submodularity that $r(X \cup Y) = 4$ and $r(X \cap Y) = 2$. As $d(N|(X \cup Y)) \le \tfrac{9}{4}$, we deduce that $|X \cup Y| = 9$, so $|X \cap Y| = 3$ and $N = N|(X \cup Y)$. Moreover, $N|X$ and $N|Y$ meet in a triangle $\Delta$. By Lemma~\ref{F1}, $N$ is the generalized parallel connection of $N|X$ and $N|Y$ across $\Delta$. Thus $N \cong M(K_5 \del e)$ as each of $N|X$ and $N|Y$ is isomorphic to $M(K_4)$, so \ref{intn} holds. 

We may now assume that $E(N)$ has at least three distinct subsets $X$ with $N|X \cong M(K_4)$ and that no two such subsets meet in more than one element. 

 \begin{sublemma}
\label{intn2}
$N$ does not have $P(M(K_4),M(K_4))$ as a restriction.  
\end{sublemma}

Assume that $N|X \cong  P(M(K_4),M(K_4))$ and $N|Y \cong M(K_4)$ where $Y \not \subseteq X$. Suppose $|X \cap Y| = k$ where $k \in \{1,2\}$. Then $r(X \cup Y) \le 8-k$ and $|X \cup Y| = 17 - k$, so 
$$\frac{9}{4} \ge d(N|(X \cup Y)) \ge \frac{17-k}{8-k}.$$
Simplifying we obtain the contradiction that $4 \ge 5k \ge 5$. We deduce using \ref{intn} that $|X \cap Y| = 0$. Then $r(X \cup Y) = 8$ otherwise 
$d(N|(X \cup Y)) > \tfrac{9}{4}$. 

By Corollary~\ref{dir2}, $N$ has a simple connected minor $N'$ such that $N'|(X \cup Y) = N|(X \cup Y)$ and $r(N') = 8$. As $N|(X \cup Y)$ is disconnected, $N'$ must contain an element that is not in $X \cup Y$. Hence $|E(N')| \ge 18$, so $d(N') \ge \tfrac{9}{4}$. Thus $N' = N$ and $|E(N)| = 18$, so $N$ has a single element $z$ that is not in $X \cup Y$. The $M(K_4)$-restriction of $N$ that contains $z$ is forced to have more than one element in common with $Y$ or one of the $M(K_4)$-restrictions of $N|X$. This contradiction to \ref{intn} completes the proof of \ref{intn2}.

We now know that any two $M(K_4)$-restrictions of $N$ have disjoint ground sets. Let $X$, $Y$, and $Z$ be distinct subsets of $E(N)$ such that each of $N|X$, $N|Y$, and $N|Z$ is isomorphic to $M(K_4)$. Next we show the following.

 \begin{sublemma}
\label{intn3}
$r(X \cup Y) = 6$. Moreover, $r(X \cup Y \cup Z) = 9$ unless $N \cong M_{18}$.
\end{sublemma}

As $|X \cup Y| = 12$ and $d(N|(X \cup Y)) < \tfrac{9}{4}$, we deduce that  $r(X \cup Y) = 6$. The density constraint also means that $r(X \cup Y \cup Z)  \ge 8$. Suppose $r(X \cup Y \cup Z) = 8$. Then $d(N|(X \cup Y\cup Z)) = \tfrac{9}{4}$, so $N = N|(X \cup Y \cup Z)$. Now $r(N/Z) = 5$. As $\tfrac{12}{5} > \tfrac{9}{4}$, we must have some parallel elements in $N/Z$. As $Z$ is skew to each of $X$ and $Y$, we know that $(N/Z)|X = N|X$ and $(N/Z)|Y = N|Y$. Thus there must be elements $x$ of $X$ and $y$ of $Y$ that are parallel in $N/Z$. If there is a second such parallel pair, then $r(N/Z) \le 4$, a contradiction. In $N$, we see that $r(Z \cup \{x,y\}) = 4$. Hence, in $N/x$, we obtain a $7$-point plane $Z \cup y$ unless $\{x,y,z\}$  is a  triangle of $N$ for some $z$ in $Z$. Observe that  each of $N/x$, $N/y$, and $N/z$ is disconnected, so $N$ is obtained from $M(K_3)$ by attaching a copy of $M(K_4)$ via parallel connection at each element. Thus $N \cong M_{18}$ and \ref{intn3} holds.

By Corollary~\ref{dir2}, $N$ has a simple connected minor $N'$ of rank $9$ such that $N'|(X \cup Y \cup Z) = N|(X \cup Y \cup Z)$. As $N'$ is connected, there is an element $g$ of $E(N') - (X \cup Y \cup Z)$. Since $N'$ has no plane with more than six points, $g$ is not in the closure of any of $X$, $Y$, or $Z$ in $N'$. As  $N'/g$ has rank $8$ but has density less than $\tfrac{9}{4}$, the eighteen elements of $X \cup Y \cup Z$ cannot all be in distinct parallel classes of $N'/g$. Thus $N'$ has a triangle $\{x,y,g\}$ where we may assume that $x \in X$ and $y \in Y$. Since $N'|(X \cup Y \cup Z \cup g)$ has $Z$ as a component, there is an element $h$ of $E(N')$ that is in neither $\cl_{N'}(X \cup Y)$ nor $\cl_{N'}(Z)$. As above, $N'$ has a triangle $\{h,z,t\}$ where $t \in X \cup Y$ and $z \in Z$. Contracting $g$ and $h$ from $N'|(X \cup Y \cup Z \cup \{g,h\})$ and simplifying, we get a rank-$7$ matroid with $16$ elements. As $\tfrac{16}{7} > \tfrac{9}{4}$, we have a contradiction that completes the proof of Lemma~\ref{F8.1}. 
\end{proof}

\begin{lemma}
\label{F4} 
Let $N$ be a simple connected  matroid having an element $z$  such that each of $N$ and $\si(N/z)$ has every element in at least two triangles.   If $d(N) \le \tfrac{9}{4}$ and $d(N') < \tfrac{9}{4}$  for all proper minors $N'$ of $N$, then $N$ is isomorphic to $ P(U_{2,4}, M(K_4))$,      $M(K_5 \del e)$, or $M^*(K_{3,3})$.
\end{lemma}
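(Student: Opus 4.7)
My plan is first to reduce to $r(N) = 4$ and then apply Corollary~\ref{F3} to $N$ itself to obtain the three listed matroids.

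I begin by ruling out $r(N) \le 3$. When $r(N) \le 2$, the matroid $\si(N/z)$ has rank at most $1$ and contains no triangle, so its unique element cannot lie in two triangles, contradicting the hypothesis on $\si(N/z)$. When $r(N) = 3$, the matroid $\si(N/z)$ has rank $2$ and every element in at least two triangles; this forces $\si(N/z) \cong U_{2,n}$ with $n \ge 4$, and combined with $d(\si(N/z)) < 9/4$ one gets $n = 4$. Writing $t$ for the number of triangles of $N$ through $z$, we have $t \ge 2$ by hypothesis and $|E(N)| = 1 + |E(\si(N/z))| + t = 5 + t \ge 7$, contradicting $|E(N)| \le 9r(N)/4 = 27/4$. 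For $r(N) = 4$, Corollary~\ref{F3} applied to $N$ directly yields the three matroids in the conclusion, since $N$ is simple, connected, has every element in at least two triangles, and satisfies $d(N) \le 9/4$.

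To rule out $r(N) \ge 5$, I apply Lemma~\ref{F3.1} to each element $e$ of $N$. The density bound ensures that no plane of $N$ has seven or more points (any such plane would be a restriction of density exceeding $9/4$), so option (i) fails for every $e$. Hence each $e$ satisfies either (ii) that every element of $\si(N/e)$ is in $\ge 2$ triangles, or (iii) that $e$ lies in a $U_{2,4}$- or $M(K_4)$-restriction. In Case A, where every element satisfies (iii), Lemma~\ref{F8.1} classifies $N$ as one of $U_{2,4}, M(K_4), P(U_{2,4},M(K_4)), P(M(K_4),M(K_4)), M(K_5\del e), M_{18}$. Among those of rank $\ge 5$, namely $P(M(K_4),M(K_4))$ and $M_{18}$, a direct check shows that no choice of $z$ yields $\si(N/z)$ with every element in at least two triangles: each contraction produces an $M(K_3)$-like component whose non-basepoint elements are in only one triangle, contradicting the hypothesis. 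In Case B, some element $e^*$ satisfies (ii) but not (iii); the two triangles of $N$ through $e^*$ meet only at $e^*$ (else a $U_{2,4}$-restriction through $e^*$ would arise) and span a rank-$3$ plane $P^*$ with exactly five elements (else $N|\cl(P^*)$ would contain an $M(K_4)$-restriction through $e^*$ by Corollary~\ref{F3}). Each of the four non-$e^*$ elements of $P^*$ lies in only one triangle of $N|P^*$ and so must be in a further triangle using elements of $E(N)-\cl(P^*)$. Analyzing $\si(N/e^*)$, which is simple with every element in $\ge 2$ triangles and has rank $r(N)-1 \ge 4$ and density strictly less than $9/4$, I invoke Corollary~\ref{F3} (or iterate the current dichotomy on $\si(N/e^*)$) to contradict the assumption $r(N) \ge 5$ by returning to the well-classified rank-$\le 4$ regime.

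The main obstacle I anticipate is the Case B analysis: showing that forbidding a $U_{2,4}$- or $M(K_4)$-restriction through $e^*$, together with the every-element-in-two-triangles property of both $N$ and $\si(N/e^*)$ and the tight density bound, cannot accommodate $r(N) \ge 5$. I expect this step to require careful counting of the triangles crossing between $\cl(P^*)$ and its complement, combined with iterated rank-reducing contractions, until a suitable minor falls within the scope of Corollary~\ref{F3} and delivers the final contradiction.
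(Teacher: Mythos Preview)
Your reduction to $r(N)=4$ for small rank and your Case~A are essentially fine: Lemma~\ref{F8.1} does apply to $N$ under your Case~A hypothesis, and it is correct that neither $P(M(K_4),M(K_4))$ nor $M_{18}$ admits an element $z$ with $\si(N/z)$ having every element in at least two triangles.

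The genuine gap is Case~B, and you have in fact flagged it yourself. Two concrete problems: first, when $r(N)=5$ you propose to apply Corollary~\ref{F3} to $\si(N/e^*)$, but that corollary requires connectedness, and nothing prevents $\si(N/e^*)$ from being disconnected; indeed, since each of the three rank-$4$ matroids in Corollary~\ref{F3} has density exactly $\tfrac{9}{4}$ while $d(\si(N/e^*))<\tfrac{9}{4}$, the only live possibility is that $\si(N/e^*)$ \emph{is} disconnected, and you then need to classify its components and reassemble $N$ from them. Second, for $r(N)>5$ you offer only ``iterate the current dichotomy on $\si(N/e^*)$,'' which is not an argument: you have not set up an induction, you have not verified the hypotheses of the lemma for the components of $\si(N/e^*)$ (each component would need its own element playing the role of $z$), and the digression about the five-point plane $P^*$ does not feed into any contradiction.

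The paper closes exactly this gap by reorganizing the argument as an induction on $r(N)$ and, crucially, by applying Lemma~\ref{F3.1} not to $N$ but to each component $N_2$ of $N_1=\si(N/z)$, where the hypothesis on $z$ guarantees up front that every element of $N_1$ lies in at least two triangles. Then the dichotomy for $N_2$ is: either some $z_2\in E(N_2)$ has $\si(N_2/z_2)$ with every element in at least two triangles, so the induction hypothesis forces $d(N_2)=\tfrac{9}{4}$, a contradiction; or every element of $N_2$ lies in a $U_{2,4}$- or $M(K_4)$-restriction, so Lemma~\ref{F8.1} forces $N_2\in\{U_{2,4},M(K_4),P(M(K_4),M(K_4))\}$. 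A short counting argument on $|E(N)|$ versus $r(N)$, using that $z$ lies in at least two triangles of $N$, then rules out $r(N)\ge 5$. The point is that working inside $\si(N/z)$ rather than $N$ makes the induction run cleanly and removes the need for your unfinished Case~B analysis.
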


\begin{proof}
We argue by induction on $r(N)$, which must be at least three. Suppose it is exactly three. Since $\si(N/z)$ has density less than $\tfrac{9}{4}$, it is isomorphic to $U_{2,4}$. As $d(N) \le \tfrac{9}{4}$, we see that $|E(N)| \le 6$. By Lemma~\ref{F1}, $N$ has no $2$-cocircuits. Thus $N$ has a triangle whose complement is a triad. By Lemma~\ref{F1} again, $N \cong M(K_4)$ and we get a contradiction. Hence $r(N) \ge 4$. If $r(N)  =  4$, then, by Corollary~\ref{F3}, $N$ is isomorphic to $ P(U_{2,4}, M(K_4)),      M(K_5 \del e)$, or $M^*(K_{3,3})$.

Now assume the result holds for $r(N) < k$ and let $r(N) = k \ge 5$. Let $N_1 = \si(N/z)$. Every element of $N_1$ is in at least two triangles. Let $N_2$ be a component of $N_1$. 
By Lemma~\ref{F3.1}, either every element of $N_2$ is in a $U_{2,4}$- or $M(K_4)$-restriction, or $N_2$ has an element $z_2$ such that every element of $\si(N_2/z_2)$ is in at least two triangles. If the latter occurs, then, by the induction assumption, $N_2$ is isomorphic to  $P (U_{2,4}, M(K_4)),      M(K_5 \del e)$, or $M^*(K_{3,3})$. Each of these matroids has density $\tfrac{9}{4}$, a contradiction. Thus 
every element of $N_2$ is in a $U_{2,4}$- or $M(K_4)$-restriction.  As $d(N_2) < \tfrac{9}{4}$, Lemma~\ref{F8.1} implies  that $N_2$, and hence each component of $N_1$, is isomorphic to one of $U_{2,4}$, $M(K_4)$, or $P(M(K_4),M(K_4))$. 

Suppose that $N_2 = N_1$. Then, as $r(N) \ge 5$, we deduce that 
 $N_1 \cong P(M(K_4),M(K_4))$. As $N_1 = \si(N/z)$, we see that $r(N) = 6$. Because $d(N) \le \tfrac{9}{4}$, it follows that $|E(N)| \le 13$. Since $z$ is in at least two triangles of $N$, we deduce that $|E(N)| \ge |E(N_1)| + 3 = 14$, a contradiction.
 
We may now assume that $N_1$ has more than one component. Hence, for some $k \ge 2$, there is a collection $N^1,N^2,\dots,N^k$ of connected matroids such that $E(N^i) \cap E(N^j) = \{z\}$ for all $i \neq j$, the matroid $N^i / z$ is connected for all $i$, and $N$ is the parallel connection of $N^1,N^2,\dots,N^k$ across the common basepoint $z$. As noted above, each $\si(N^i/z)$ is isomorphic to one of $U_{2,4}$, $M(K_4)$, or $P(M(K_4),M(K_4))$.  As every element of $N$ is in at least two triangles,   every element of each $N^i$ except possibly $z$ is in at least two triangles of $N^i$. Thus, by Corollary~\ref{F3}, $N^i \cong M(K_4)$; or $r(N^i) = 4$ and $|E(N^i)| = 9$; or $r(N^i) > 4$. In the first case, $\si(N^i/z) \not \cong U_{2,4}$; in the second case, $d(N^i) = \tfrac{9}{4}$. Both of these possibilities give contradictions, so $\si(N^i/z) \cong  P(M(K_4),M(K_4))$ for each $i$. As $z$ is in at least two triangles of $N$, we may assume the elements of two such  triangles lie in $E(N^1) \cup E(N^2)$. As $|E(\si(N^i/z))| = 11$ and $r(N^i/z) = 5$, we  see that $|E(N^1) \cup E(N^2)| \ge 25$ and $r(E(N^1) \cup E(N^2)) = 11$. But $\tfrac{25}{11} > \tfrac{9}{4}$,   a contradiction.
\end{proof}

We conclude the paper by proving Theorem~\ref{9/4}. In this proof, we will make extensive use of the Cunningham-Edmonds canonical tree decomposition of a connected matroid. The definition and properties of this decomposition may be found in \cite[Section 8.3]{JGO11}. In brief, associated with each connected matroid $M$, there is a tree $T$ that is unique up to the labelling of its edges. Each vertex of $T$ is labelled by a circuit, a cocircuit, or a $3$-connected matroid with at least four elements. Moreover,    no two adjacent vertices of $T$ are labelled by   circuits and no two adjacent vertices are labelled by cocircuits. For an edge $e$ of $T$ whose endpoints are labelled by matroids $M_1$ and $M_2$, the ground sets of these two matroids meet in $\{e\}$. When we contract $e$ from $T$, the composite vertex that results by identifying the endpoints of $e$ is labelled by the $2$-sum of $M_1$ and $M_2$. By repeating this process, contracting all of the remaining edges of $T$ one by one, we eventually obtain a single-vertex tree. Its vertex is labelled by $M$. 

Each edge $f$ of $T$  induces a partition of $E(M)$. This partition is a $2$-separation of $M$ {\it displayed by $f$}. The remaining $2$-separations of $M$ coincide with those  that are displayed by those vertices of $T$ that are labelled by circuits or cocircuits. For such a vertex $v$ having label $N$, there is a partition $\{X_1,X_2,\dots,X_k\}$ of $E(M) - E(N)$ induced by the components of $T - v$. A partition $(X,Y)$ of $E(M)$ is {\it displayed by the vertex $v$} if each $X_i$ is contained in $X$ or $Y$. Every such partition of $E(M)$ with both $X$ and $Y$ having at least two elements is a $2$-separation of $M$ and these $2$-separations along with those displayed by the edges of $T$ are all of the $2$-separations of $M$. Recall that, for all $n \ge 2$, we denote by $P_n$   any matroid that can be constructed from $n$ copies of $M(K_3)$ via a sequence of parallel connections.

\begin{proof}[Proof of Theorem~\ref{9/4}.]
Let $M$ be a density-critical matroid with $d(M) \le \tfrac{9}{4}$. Suppose   $d(M) \ge 2$. By Lemma~\ref{lem0}, every element of $M$ is in at least two triangles. By Corollary~\ref{F3}, if $r(M) \in \{2,3\}$, then $M$ is   $U_{2,4}$ or $M(K_4)$. We may now assume that $r(M) \ge 4$. By Lemma~\ref{F3.1}, either every element of $M$ is in a $U_{2,4}$- or $M(K_4)$-restriction, or, for some element $z$ of $M$,  every element of $\si(M/z)$ is in at least two triangles. In the first case, by Lemma~\ref{F8.1}, $M$ is isomorphic to  $P (U_{2,4}, M(K_4)),    
P(M(K_4), M(K_4)), 
 M(K_5 \del e)$, or $M_{18}$. In the second case, by Lemma~\ref{F4}, $M$ is isomorphic to $P(U_{2,4}, M(K_4)),      M(K_5 \del e)$, or $M^*(K_{3,3})$. 
 Thus the theorem identifies all possible density-critical matroids with density in $[2,\tfrac{9}{4}]$ and one easily checks that each of the matroids identified is indeed density-critical. 
 
 Now suppose that $d(M) < 2$. By Lemma~\ref{crit}, $M$ is connected. Clearly, if $r(M)$ is $1$ or $2$, then $M$ is isomorphic to $U_{1,1}$ or $U_{2,3}$. As $U_{2,4}$ and  $M(K_4)$   both have density $2$, $M$ is a series-parallel network (see, for example, \cite[Corollary 12.2.14]{JGO11}). Thus, in the Cunningham-Edmonds canonical tree decomposition $T$ of $M$, every vertex is labelled by a circuit or a cocircuit. Since $M$ is simple, for every vertex of $T$ that is labelled by a cocircuit $C^*$, at most one element of $C^*$ is in $E(M)$. Let $e$ be an edge of $T$ that meets the vertex labelled by $C^*$. Then, for the $2$-separation $(X,Y)$ of $M$ that is displayed by $e$,   Lemma~\ref{elconn} implies that   $M$ has an element $p$   in $\cl(X) \cap \cl(Y)$. Thus $p \in C^*$, so $C^*$ contains exactly one element of $M$.
 
 Now take a vertex of $T$ that is labelled by a circuit $C$ where $C = \{e_1,e_2,\dots,e_k\}$ and suppose that $k \ge 4$. Suppose $e_1 \in E(M)$. Then $M/e_1$ is simple having rank $r(M) - 1$. As 
  $\tfrac{|E(M)| - 1}{r(M) - 1} <  \tfrac{|E(M)|}{r(M)},$  
 we obtain the contradiction that $|E(M)| < r(M)$. We deduce that $C \cap E(M) = \emptyset$. Now $T\del e_1,e_2$ has exactly three components. Let $T'$ be the one containing $e_3$ and let $X$ be the subset of $E(M)$ corresponding to $T'$. Then $(X,E(M) - X)$ is a $2$-separation of $M$. By Lemma~\ref{elconn}, there is an element $p$ of $M$ that is in $\cl(X) \cap \cl(E(M) - X)$. But the tree decomposition implies that there is no such element. We deduce that $C$ has exactly three elements. Thus every vertex of $T$ that is labelled by a circuit  is labelled by a triangle. Since every vertex of $T$ that is labelled by a cocircuit has exactly one element of $E(M)$ in that cocircuit, a straightforward induction argument establishes that, for some $n \ge 2$, the matroid $M$ is obtained from $n$ copies of $M(K_3)$ by a sequence of $n-1$ parallel connections. Thus $M \cong P_n$. 
 
 Finally, we show by induction  that $P_n$ is density-critical. This is true for $n = 1$. Assume it true for $n < m$ and let $n = m\ge 2$. Take $x$ in $P_n$.  %It suffices to show that every minor of $P_n/x$ has density less than  $d(P_n)$. 
 Assume first that $x$ is in exactly one triangle $\{x,y,z\}$. Then $\si(P_n/x) \cong P_n/x\del z$. As the last matroid is easily seen to be isomorphic to the density-critical matroid $P_{n-1}$ and $d(P_{n-1}) < d(P_n)$, we deduce that every minor of $P_n/x$ has density less that $d(P_n)$. Now assume that $x$ is in at least two triangles of $P_n$. Then $\si(P_n/x)$ is easily seen to be the direct sum of a collection of matroids each of which is isomorphic to some $P_k$ with $k < n$. By Lemma~\ref{crit} and the induction assumption,  every minor of $P_n/x$ has density less that $d(P_n)$. We conclude that $P_n$ is density-critical, so the theorem is proved.
\end{proof}

\section*{Acknowledgements}
The work  for this paper was done at the Tutte Centenary Retreat (https://www.matrix-inst.org.au/events/tutte-centenary-retreat/) held at the MAThematical Research Institute (MATRIx), Creswick, Victoria, Australia, 26 November -- 2 December, 2017. The authors are very grateful for the support provided by MATRIx. The first, second, and fourth  authors were supported, respectively, by NSERC Scholarship PGSD3-489418-2016,  National Science Foundation Grant 1500343, and the New Zealand Marsden Fund.

\end{document}